\documentclass[12pt,letterpaper]{article}
\usepackage{amsmath,amssymb,amsfonts,amsthm,mathrsfs, url,hyperref}
\usepackage{graphicx,enumerate}
\usepackage[usenames,dvipsnames]{color}

\usepackage[margin=1in]{geometry}

\usepackage[normalem]{ulem}


\def\dist{\text{dist}}

\theoremstyle{plain}
\newtheorem{thm}{Theorem}
\newtheorem{lem}[thm]{Lemma}
\newtheorem{cor}[thm]{Corollary}

\newtheorem{remark}[thm]{Remark}

\theoremstyle{definition}

\theoremstyle{remark}

\newcommand{\real}{\ensuremath {\mathbb R} }	

\newcommand{\nat}{\ensuremath {\mathbb N} }

\newcommand{\mbf}[1] {\text{\boldmath$#1$}}
\newcommand{\remove}[1] {}

\newcommand{\ex} {{\bf E}}
\newcommand{\pr} {{\bf Pr}}

\newcommand{\bfrac}[2]{\brac{\frac{#1}{#2}}}

\newcommand{\ra}{\rightarrow}
\newcommand{\la}{\leftarrow}
\newcommand{\multstar}[1]{\begin{multline*}#1\end{multline*}}

\newcommand{\Din}{\cD^{\text{in}}}
\newcommand{\Dout}{\cD^{\text{out}}}

\def\a{\alpha}

 \def\om{\omega}

\def\cG{{\cal G}}
\def\cD{{\mathcal D}}
\def\cA{\mathcal A}

\newcommand{\brac}[1]{\left(#1\right)}

\def\cH{{\mathcal H}}
\def\sP{{\mathscr P}}

\title{On the existence of Hamilton cycles with a periodic pattern in a random digraph}
\author{Alan Frieze\thanks{Department of Mathematical Sciences, Carnegie Mellon University, Pittsburgh PA, USA, 15213. Research supported in part by NSF grant DMS1661063.}, Xavier P\'erez-Gim\'enez\thanks{Department of Mathematics, University of Nebraska-Lincoln, Lincoln NE, USA, 68588. Research supported in part by Simons Foundation Grant \#587019.}, Pawe\l{} Pra\l{}at\thanks{Department of Mathematics, Ryerson University, Toronto, ON, Canada M5B 2K3. Research supported in part by NSERC Discovery Grant.}}
\date{}

\begin{document}
\maketitle
\begin{abstract}
We consider Hamilton cycles in the random digraph $\cD_{n,m}$ where the orientation of edges follows a pattern other than the trivial orientation in which the edges are oriented in the same direction as we traverse the cycle. We show that if the orientation forms a periodic pattern, other than the trivial pattern, then approximately half the usual $n\log n$ edges are needed to guarantee the existence of such Hamilton cycles a.a.s.
\end{abstract}

\section{Introduction}

The existence of Hamilton cycles is one of the key issues in the study of random graphs. The existence threshold for the random graph process $(\cG_{n,m})_{m=0}^{n (n-1) / 2}$ was found by Ajtai, Koml\'os and Szemer\'edi~\cite{KS} and the existence threshold for the directed analogue $(\cD_{n,m})_{m=0}^{n(n-1)}$ was found by Frieze~\cite{F1}. (See the next section for definitions of all models mentioned in the introduction as well as the asymptotic notation used.) There is a large literature on this subject and the reader is referred to the bibliography by Frieze \cite{F2} for more information.

The result in~\cite{F1} refers to Hamilton cycles in which the edges are oriented in the same direction round the cycle. Ferber and Long~\cite{FL} considered Hamilton cycles with prescribed edge orientations. They proved that if $C_n$ is an $n$-cycle with an arbitrary edge orientation then if $np\gg (\log\log n)\log n$ then $\cD_{n,p}$ contains a copy of $C_n$ a.a.s. (In fact, they proved more than this, in that they proved the existence of many copies.) They conjectured that, if $np=\log n+\om(1)$, then this is sufficient for $\cD_{n,p}$ to contain a copy of $C_n$ a.a.s. Montgomery \cite{M} has recently announced a proof of this conjecture. 

In this paper we consider Hamilton cycles with a periodic pattern of edge orientations and show that we need approximately half as many random edges to guarantee the existence of such Hamilton cycles a.a.s.

The paper is structured as follows. In the next section, we introduce all the definitions and state the main results. Section~\ref{idea} outlines the proof. Section~\ref{struct} provides structural results, and Section~\ref{cycles} uses these results to construct the claimed cycles. 

\section{Definitions and Main Results}

\subsection{Random Digraphs}

In this paper we present results obtained for the \emph{random directed graph} $\cD_{n,p}$. More precisely, $\cD_{n,p}$  is a distribution over the class of graphs with vertex set $[n]:=\{1,\ldots,n\}$ in which every ordered pair $ij$, $i,j \in [n]$ and $i \neq j$, appears independently as an arc in $\cD_{n,p}$ with probability~$p$. Note that $p=p(n)$ may (and usually does) tend to zero as $n$ tends to infinity. We say that $\cD_{n,p}$ has some property \emph{asymptotically almost surely} (or a.a.s.) if the probability that $\cD_{n,p}$ has this property tends to $1$ as $n$ goes to infinity.

As it is done in the theory of (undirected) random graphs, we slightly abuse the notation and for some natural number $m = m(n) \in \nat$ use $\cD_{n,m}$ to denote a directed graph on $n$ vertices and precisely $m$ arcs taken uniformly at random from the family of directed graphs on $n$ vertices and $m$ arcs.  Alternatively, $\cD_{n,m}$ can be constructed during the \emph{random digraph process}. Indeed, one may consider a sequence of digraphs $(\cD_{n,m})_{m=0}^{n(n-1)}$ with common vertex set $[n]$ in which $\cD_{n,0}$ is the empty digraph and $\cD_{n,m+1}$ is obtained from $\cD_{n,m}$ by adding a random arc that is not present in $\cD_{n,m}$. The process ends once $\cD_{n,n(n-1)}$, the complete digraph, is reached. 

\subsection{Asymptotic Notation and Convention}
 
Given two nonnegative functions $f=f(n)$ and $g=g(n)$, we will write $f=O(g)$ if there exists an absolute constant $c$ such that $f(n) \leq cg(n)$ for all $n \in \nat$, $f=\Omega(g)$ if $g=O(f)$, $f=\Theta(g)$ if $f=O(g)$ and $f=\Omega(g)$, and we write $f=o(g)$ or $f\ll g$ if the limit $\lim_{n\to\infty} f(n)/g(n)=0$. In addition, we write $f=\omega(g)$ or $f\gg g$ if $g=o(f)$. We also will write $f\sim g$ if $f=(1+o(1))g$. 

Through the paper, all logarithms with no subscript denoting the base will be taken to be natural. Moreover, as typical in the field of random graphs, for expressions that clearly have to be an integer, we round up or down but do not specify which, as long as this rounding does not affect the argument.

\subsection{Patterns}


Given a cycle $C=(v_1,\ldots, v_n,v_1)$ of length $n\ge1$ (where $C$ is a loop for $n=1$ and a double arc for $n=2$), we are interested in describing periodic orientations of the edges of $C$.
A~\emph{pattern} of length $k\ge1$ is a $k$-tuple $\pi = (\pi_1,\ldots,\pi_k)$ where $\pi_i\in\{\ra,\la\}$.
We say that an orientation of $C$ \emph{follows} pattern $\pi$ if $k\mid n$ and each edge $v_iv_{i+1}$ for $i\in[n]$ is oriented as
$\overrightarrow{v_iv_{i+1}}$ if $\pi_i=\ra$ or as $\overleftarrow{v_iv_{i+1}}$ otherwise.
(Here $v_{n+1}=v_1$, and the indices of $\pi_i$ should be taken modulo $k$.)
We identify a pattern $\pi$ with the only oriented cycle $C_\pi$ of length $k$ which follows $\pi$, and
consider two patterns $\pi$ and $\pi'$ to be \emph{equivalent} if their corresponding oriented cycles $C_\pi$ and $C_{\pi'}$ are isomorphic digraphs.
In other words, two patterns are equivalent if they can be obtained from one another by applying a cyclic rotation of the entries and/or a reflection (that is, reversing \emph{both} the order of the entries and the direction of the arrows).
A pattern is \emph{primitive} if it is not a concatenation of two or more consecutive copies of a shorter pattern.
We call an oriented cycle $C$ in a digraph $D$ a \emph{$\pi$-Hamilton cycle} (or \emph{$\pi$-HC} for short) if $C$ spans $D$ and follows pattern $\pi$.
Clearly, for two equivalent patterns $\pi$ and $\pi'$, a digraph $D$ contains a $\pi$-HC if and only if it contains a $\pi'$-HC.
Moreover, if $\pi$ is a primitive pattern of length $k$ and $\pi'$ is the concatenation of $\ell$ copies of $\pi$, then a digraph on $n$ vertices has a $\pi'$-HC if and only if it has a $\pi$-HC and $\ell k\mid n$.
In view of that, we can restrict our attention to the analysis of primitive patterns $\pi$.
All patterns of length $1$ are equivalent to $(\ra)$, which we call the \emph{trivial} pattern. Likewise, all primitive patterns of length $2$ are equivalent to $(\ra,\la)$. We refer to $(\ra,\la)$ as the \emph{alternating} pattern, so a $(\ra,\la)$-HC is simply called an alternating Hamilton cycle.
Primitive patterns of length $k\ge3$ are defined to be \emph{non-alternating}.

\begin{remark}\label{rem:nonprimitive}
While our main results concern only primitive patterns, in some parts of the argument it will be convenient to consider the non-primitive pattern $(\ra,\la,\ra,\la)$ as well.
This pattern will be useful when investigating the existence of alternating Hamilton cycles.
Indeed, when the number of vertices $n$ is divisible by $4$, a Hamilton cycle follows $(\ra,\la)$ if and only if it follows $(\ra,\la,\ra,\la)$.
In view of this, we can avoid some of the technical difficulties that arise in the analysis of patterns of length $2$ by simply redefining the alternating pattern to be $(\ra,\la,\ra,\la)$ when $4\mid n$.
The case when $n\equiv 2\mod 4$ can be reduced to the previous case with a slight modification of the argument.
\end{remark}

\remove{
OLD:
\bigskip

A \emph{pattern} of length $k$ is a cyclically ordered tuple $\pi = (\pi_1,\ldots,\pi_k)$ where $\pi_i\in\{\ra,\la\}$.
A~pattern is \emph{nontrivial} if it contains at least one edge in each direction. Since trivial patterns have already been studied, we will always assume that patterns are nontrivial. A pattern is \emph{primitive} if it is not a repetition of several consecutive copies of a smaller pattern. We will always assume that patterns are primitive. The trivial pattern $(\ra)$ (or $(\la)$) is the only pattern of order $1$, and the alternating pattern $(\ra,\la)$ (or $(\la,\ra)$) is the only pattern of order $2$.

An \emph{oriented cycle} $(v_1,\ldots, v_n)$ of length $n$ \emph{follows} pattern $\pi$ of length $k$ if $k\mid n$ and each edge $v_iv_{i+1}$ is oriented as $\pi_i$ with indices taken in arithmetic modulo $k$. We say that such cycle is a \emph{$\pi$-Hamilton cycle} (or \emph{$\pi$-HC} for short).
}

\subsection{Results}

We are now ready to state our main results.

\begin{thm}\label{thm1} Let $\pi=(\ra,\la)$ be the alternating pattern. Consider the random digraph process $(\cD_{n,m})_{m=0}^{n(n-1)}$ (restricted to even $n$). The following property holds a.a.s.:\ $\cD_{n,m}$ contains a $\pi$-HC the first time all vertices have in-degree at least 2 or out-degree at least 2.
\end{thm}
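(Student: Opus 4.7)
The plan is a hitting-time argument in the spirit of Ajtai--Koml\'os--Szemer\'edi and Frieze \cite{F1}, adapted to the alternating orientation. Necessity is immediate: in a $\pi$-HC $v_1 \to v_2 \leftarrow v_3 \to v_4 \leftarrow \cdots$ with $\pi=(\ra,\la)$, each odd-indexed vertex contributes two out-arcs to the cycle and each even-indexed vertex contributes two in-arcs, so every $v$ must satisfy $d^+(v)\ge 2$ or $d^-(v)\ge 2$. The hitting time $\tau$ in the theorem therefore lower-bounds the first appearance of a $\pi$-HC, and a standard first-moment computation places $\tau \sim \tfrac12 n\log n$ a.a.s., giving the factor-of-two saving over \cite{F1}. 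I would work in the equivalent $\cD_{n,p}$ model with $p = (\log n + c)/(2n)$ for slowly growing $c = c(n)$, and condition on the degree type of each vertex.

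Partition $V = A_0 \cup B_0 \cup C_0$ where $A_0 = \{v : d^-(v)<2\}$ (so $v$ must serve as a source of the cycle), $B_0 = \{v : d^+(v)<2\}$ (must be a sink), and $C_0$ is the flexible set of vertices satisfying both lower bounds. A first-moment computation at $\tau$ gives a.a.s.\ $|A_0|+|B_0| = o(n)$ and $|C_0| = n - o(n)$. A $\pi$-HC corresponds to an extension of this partition to a balanced bipartition $V = A \sqcup B$ with $|A|=|B|=n/2$, $A_0\subseteq A$, $B_0\subseteq B$, together with a Hamilton cycle alternating $A\to B\to A\to\cdots$. I would use two-round exposure: the first round establishes structural expansion (every moderate candidate source set has many out-neighbors among the candidate sinks, and symmetrically), and a sparse second round is reserved for sprinkling.

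The heart of sufficiency is a P\'osa-type rotation tailored to alternating paths. Given a longest alternating path $P$ with source endpoint $v$, an out-arc $v\to w$ with $w$ at a sink position of $P$ produces, after deleting one adjacent arc of $P$, a new alternating path on $V(P)$ with a different source endpoint; a symmetric move rotates the sink endpoint. Iterating with the expansion yields $\Omega(n)\times\Omega(n)$ reachable endpoint pairs, and the sprinkled arcs then close a $\pi$-HC with high probability. The case $n\equiv 2\pmod 4$ is handled via the reduction anticipated in Remark~\ref{rem:nonprimitive}.

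The main obstacle is the orientation bookkeeping forced by the alternating constraint. Unlike in the undirected P\'osa argument, only arcs whose direction is compatible with the source/sink role of the current endpoint produce a valid rotation, effectively halving the local expansion and requiring us to track how each rotation moves the endpoint between the $A$- and $B$-sides. Throughout the process one must preserve $A_0\subseteq A$ and $B_0\subseteq B$, so all rearrangement flexibility has to come from $C_0$. Delivering all of this precisely at the hitting time $\tau$ --- rather than at a slightly denser $p$ where $|A_0|,|B_0|$ would be smaller --- is what demands the careful structural analysis, much as in the classical hitting-time theorems.
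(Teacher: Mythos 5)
Your proposal is genuinely different from the paper's approach. The paper does \emph{not} run a P\'osa-type rotation-extension argument directly; instead it pre-partitions $[n]$ into $k$ bins (with $k=4$, not $2$, in the alternating case --- see Remark~\ref{rem:nonprimitive} and the footnote in Step~4 for why), classifies vertices as good, bad, or dangerous according to how many in- and out-arcs they see into each bin, buries every non-good vertex inside a short path of length roughly $6k$ whose vertices land in prescribed bins in pattern order, contracts those paths to single ``fat'' vertices, and then invokes Lemma~\ref{mainlemma}: Walkup's perfect-matching theorem plus the Cooper--Frieze theorem that $D_{2\text{-in},2\text{-out}}$ is a.a.s.\ Hamiltonian supply a $\pi$-cycle in the contracted $D_{S\text{-in},T\text{-out}}$ model, which un-contracts to the desired $\pi$-HC. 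All the rotation work is delegated to the Cooper--Frieze black box; what you propose is to do that work yourself, alternation and all.

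As written, though, your sketch leaves the decisive step open, and I do not think a reviewer could fill it in from what you say. The whole difficulty of a hitting-time result is the vertices with exactly two out-arcs and in-degree $\le1$ (and their mirror images). For such a vertex $v$, \emph{both} of its out-arcs must be used in the cycle, so $v$ is rigid: no rotation may ever disturb either of $v$'s cycle-arcs, and in particular if two such vertices share an out-neighbour the construction is in trouble. You gesture at this (``much as in the classical hitting-time theorems'') but do not say how the rotation endpoints are kept away from $A_0\cup B_0$, nor how the balanced bipartition $A\sqcup B$ is chosen so that expansion holds --- note that $A_0$ and $B_0$ depend on the revealed degrees, so a union bound over all completions $C_0=A\setminus A_0\sqcup B\setminus B_0$ of the partition is not free. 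The paper handles precisely this via properties {\bf H2}--{\bf H3} (dangerous vertices and short cycles have only good vertices within distance $10k$) and by building disjoint paths through each non-good vertex \emph{before} any rotation-type machinery enters; the fat-vertex contraction then removes these rigid vertices from the part of the argument that needs flexibility. Without an analogue of that quarantine step, the rotation argument does not close at the exact hitting time, only at slightly denser $p$, and the theorem would be weakened to a threshold rather than a hitting-time statement. A secondary issue: with a two-part bipartition the arcs used to close the final cycle and the arcs used in the ``matching-like'' structure compete for the same pool between the two sides; the paper deliberately works with $k=4$ bins (treating $\pi$ as $(\ra,\la,\ra,\la)$) precisely so that the arcs between $B_1$ and $B_2$ are disjoint from those between $B_1$ and $B_k$, and you would need a substitute for that separation.
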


From this result, we immediately and easily establish the sharp threshold for the existence of the alternating Hamilton cycle, and obtain the limiting probability at the critical window. In particular, the following corollary holds.

\begin{cor}\label{cor1}
If $p = \frac{\log n + 2\log\log n + c}{2n}$ and $n$ is even, then
\[\pr(\cD_{n,p}\text{ contains an alternating Hamilton cycle}) \sim e^{-e^{-c}/4}.
\] 
\end{cor}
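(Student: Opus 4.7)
The plan is to derive the corollary from Theorem~\ref{thm1} by a standard Poisson approximation for the ``degree obstacle''. Call a vertex $v$ \emph{bad} if $d^-(v)\le 1$ and $d^+(v)\le 1$, and let $X$ denote the number of bad vertices in $\cD_{n,p}$. Both ``$X=0$'' and ``contains an alternating Hamilton cycle'' are monotone increasing properties; moreover, since every vertex of an alternating Hamilton cycle is incident to two out-arcs of the cycle or to two in-arcs of the cycle, the existence of an alt-HC forces $X=0$. By Theorem~\ref{thm1}, in the random digraph process the hitting times of these two properties coincide a.a.s.\ (for even $n$). Since the number of arcs in $\cD_{n,p}$ is sharply concentrated around $n(n-1)p$, which lies in the critical window where the hitting-time statement applies, the usual coupling between $\cD_{n,m}$ and $\cD_{n,p}$ transfers the equivalence, so it suffices to show
\[
\pr(X=0 \text{ in } \cD_{n,p}) \to e^{-e^{-c}/4}.
\]

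The next step is Poisson approximation. In $\cD_{n,p}$, the in- and out-degrees $d^-(v)$ and $d^+(v)$ are independent $\Bin(n-1,p)$ variables with $np=(\log n+2\log\log n+c)/2$, giving
\[
\pr(d^-(v)\le 1) = (1-p)^{n-1} + (n-1)p(1-p)^{n-2} \sim np\,e^{-np} \sim \tfrac{1}{2}n^{-1/2}e^{-c/2},
\]
and the same for $d^+(v)$. By independence, $\pr(v\text{ is bad})\sim e^{-c}/(4n)$ and $\ex X \to \lambda := e^{-c}/4$. I would then use Brun's sieve (the method of factorial moments): for every fixed $r$, the joint probability that $r$ distinct vertices $v_1,\ldots,v_r$ are all bad factorizes as $(1+o(1))(\lambda/n)^r$, since the $O(r^2)$ arcs among these vertices perturb each $d^\pm(v_i)$ by at most $r$, which is negligible compared to the tail asymptotics in the regime $np=\Theta(\log n)$. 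Hence $\ex [X]_r \to \lambda^r$ for every fixed $r$, and Brun's sieve yields $\pr(X=0)\to e^{-\lambda}$ as desired.

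The heart of the argument is Theorem~\ref{thm1} itself; granted that, the remaining work for Corollary~\ref{cor1} is essentially routine. The two places to be careful are: (i) the degree tail estimate, where both terms ``$d^\pm=0$'' and ``$d^\pm=1$'' are of comparable order and must be kept to asymptotic precision (the $2\log\log n$ correction in $p$ is chosen precisely to make $np\,e^{-np}$ of order $n^{-1/2}$, so that $\ex X$ has a finite positive limit); and (ii) the passage from the process model of Theorem~\ref{thm1} to the $\cD_{n,p}$ statement, which is a standard monotonicity and concentration argument but requires checking that the two-sided fluctuations of $e(\cD_{n,p})$ around $n(n-1)p$ shift $\pr(X=0)$ by only $o(1)$.
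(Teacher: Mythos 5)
Your proposal follows exactly the paper's route: Theorem~\ref{thm1} identifies the hitting time for an alternating Hamilton cycle with the hitting time for the degree condition $\cA$ (no vertex with $d^-\le 1$ and $d^+\le 1$), and the corollary then reduces to the Poisson approximation $\pr(\cD_{n,p}\text{ satisfies }\cA)\sim e^{-e^{-c}/4}$ carried out in Lemma~\ref{lem:PH}, which your computation of $\ex X\to e^{-c}/4$ and the factorial-moment argument reproduce. One small inaccuracy in your cautionary remark~(i): since $np\to\infty$ here, the $d^\pm=1$ term dominates the $d^\pm=0$ term by a factor of order $\log n$, so the two are not of comparable order; your displayed asymptotic $\pr(d^\pm\le 1)\sim np\,e^{-np}$ already reflects this correctly.
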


We obtain similar results for all primitive patterns of order $k\ge3$.

\begin{thm}\label{thm2} For any fixed $k\ge3$, let $\pi=(\pi_1,\ldots,\pi_k)$ be a primitive pattern. Consider the random digraph process $(\cD_{n,m})_{m=0}^{n(n-1)}$ (restricted to integers $n$ that are divisible by $k$). The following property holds a.a.s.:\ $\cD_{n,m}$ contains a $\pi$-HC the first time all vertices have total degree at least 2.
\end{thm}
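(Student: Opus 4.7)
The plan is to mirror the proof of Theorem~\ref{thm1} while taking advantage of the extra flexibility afforded by the \emph{transit} positions present in every primitive pattern of length $k\ge 3$. Classify each position $i\in[k]$ of the target cycle by the pair $(\pi_{i-1},\pi_i)$: a \emph{sink} position $(\ra,\la)$ demands in-degree at least $2$; a \emph{source} position $(\la,\ra)$ demands out-degree at least $2$; a \emph{transit} position, i.e.\ $(\ra,\ra)$ or $(\la,\la)$, only demands in-degree and out-degree both at least~$1$, equivalently total degree at least~$2$. Since $\pi$ is primitive with $k\ge 3$, transits necessarily exist, so any vertex of total degree at least~$2$ can in principle be accommodated somewhere in the pattern, which matches the stopping condition in the theorem.

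The first step is a structural analysis of $\cD_{n,m^-}$ for $m^-$ slightly below the expected hitting time $m^*\sim\tfrac12 n\log n$. Using standard tail estimates one establishes a.a.s.\ that the set $W$ of vertices with in-degree or out-degree below $\alpha\log n$ (for a small constant $\alpha>0$) has size $o(n^{1-\delta})$ for some $\delta>0$, that any two vertices of $W$ sit at undirected distance $\omega(1)$, and that the bulk $V\setminus W$ exhibits strong expansion: every not-too-large $S\subseteq V\setminus W$ has in- and out-neighbourhoods of size $\Theta(|S|\log n)$. The arcs added between times $m^-$ and the hitting time $\tau$ push every vertex of $W$ up to total degree at least~$2$ but leave the above structural properties intact.

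The second step is to pick a balanced partition $V=V_1\sqcup\cdots\sqcup V_k$ with $|V_i|=n/k$, where $V_i$ will host the position-$i$ vertices of the eventual cycle. Each $v\in W$ is assigned to a compatible position: vertices of in-degree at most~$1$ avoid sink classes; vertices of out-degree at most~$1$ avoid source classes; and the (at most polylog many) vertices with both degrees at most~$1$ land in transit classes, which exist because $k\ge 3$. The distance property from Step~1 further lets us insist that no two vertices of $W$ fall into classes $V_i,V_{i+1}$ that are adjacent along the pattern cycle, so that the cycle-arcs incident to any $W$-vertex reach into the well-expanding bulk.

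In the third step we realise the partition as a $\pi$-HC. For each $i\in[k]$, Hall's theorem combined with the bulk expansion yields a perfect matching $M_i$ from $V_i$ to $V_{i+1}$ consisting of arcs oriented as~$\pi_i$; the union $M_1\cup\cdots\cup M_k$ is then a spanning $1$-in/$1$-out subdigraph which decomposes into directed cycles each following the pattern~$\pi$. If more than one cycle appears, we merge pairs of cycles by an \emph{orientation-preserving swap}: replace two arcs of some $M_i$ by two alternative $\pi_i$-arcs crossing between the cycles. The main obstacle lies in this third step, because swaps must respect the orientation $\pi_i$ at each position and must not touch the committed arcs incident to the at-risk vertices in $W$. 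We plan to route swaps predominantly through transit positions, where the candidate swap graph is densest, and to exploit the expansion of $V\setminus W$ to guarantee that a valid swap exists between any two remaining cycles; since the initial number of cycles is a.a.s.\ $O(\log n)$, the swap budget is more than sufficient to reduce the decomposition to a single $\pi$-HC.
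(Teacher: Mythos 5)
Your proposal takes a genuinely different route from the paper, and while the opening observation is exactly the right one, there are two substantive gaps.

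The observation that a primitive pattern of length $k\ge 3$ contains sink positions $(\ra,\la)$, source positions $(\la,\ra)$, and transit positions (two consecutive arcs in the same direction) --- so that any vertex of total degree $\ge 2$ can in principle be slotted somewhere --- is correct and is also the key combinatorial fact in the paper's Step~2b, where it is used to decide in which bin a short path through a low-degree vertex should start. The two proofs then diverge in architecture: the paper fixes an equitable bin partition \emph{before} exposing any arcs, classifies vertices as good/bad/dangerous, builds a vertex-disjoint collection of short paths through the non-good vertices (moving a handful of vertices between bins to keep them balanced), contracts those paths into ``fat'' vertices, and reduces to the $D_{S\text{-in},T\text{-out}}$ model of Lemma~\ref{mainlemma}, where the heavy lifting is done by two cited black boxes (Walkup for perfect matchings, Cooper--Frieze for Hamiltonicity of $D_{2\text{-in},2\text{-out}}$). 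You instead propose to look at the degree sequence first, place low-degree vertices in compatible positions, and then produce the cycle via Hall's theorem plus an ad hoc cycle-merging argument.

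The first gap is the structural claim that any two vertices of $W$ are at distance $\omega(1)$. With $p\sim\log n/(2n)$ the probability that a fixed vertex has out-degree $0$ is roughly $n^{-1/2}$, so $|W|=\Omega(\sqrt{n}/\mathrm{polylog})$ no matter how small you take $\alpha$, and a routine second-moment computation shows the expected number of adjacent pairs inside $W$ is $\Theta(|W|^2\log n/n)\gg 1$: a.a.s.\ $W$ contains adjacent pairs, and indeed short paths entirely inside $W$. The paper avoids this problem by separating the non-good vertices into \emph{dangerous} (total degree $\le 4k+2$, of which there are $O(\mathrm{polylog}\, n)$ a.a.s., and only these are guaranteed to have no other non-good vertex nearby, property {\bf H2}) and \emph{bad} (degree $\ge 4k+3$ but poorly spread among the bins); bad vertices can sit near one another, and the properties {\bf H1}/{\bf H3} are carefully stated so that this is permitted. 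Your single coarse class $W$ makes the subsequent bin-assignment step ill-posed: when two $W$-vertices are adjacent, each one's forced bin forces the other's, and you cannot simply assign positions independently. You also do not address how the (forced) neighbours of a degree-$2$ vertex are placed into the two adjacent bins while keeping the partition balanced.

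The second gap is the cycle-merging step, which is where the real difficulty lives. After extracting perfect matchings $M_1,\dots,M_k$ by Hall's theorem, the resulting $1$-regular subdigraph has an essentially adversarial cycle structure; Hall's theorem does not hand you a uniformly random matching, so the claim ``the initial number of cycles is a.a.s.\ $O(\log n)$'' is unsupported. Even setting that aside, the swap/rotation argument you sketch must simultaneously preserve orientations, avoid the arcs committed at $W$-vertices, and terminate --- this is exactly the content of a Hamiltonicity theorem. The paper sidesteps this by contracting length-$(k-1)$ paths and observing that the contracted digraph is a bona fide $D_{2\text{-in},2\text{-out}}$ instance, then invoking Cooper--Frieze~\cite{CF2}. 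Your sketch is in effect re-deriving that nontrivial result without the machinery; to make it rigorous you would either need to cite such a theorem or carry out a full extension-rotation analysis, neither of which is present.
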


\begin{cor}\label{cor2}
Let $\pi=(\pi_1,\ldots,\pi_k)$ be a primitive pattern of length $k\ge3$. If $p = \frac{\log n + \log\log n + c}{2n}$
and $k\mid n$, then
\[
\pr(\cD_{n,p}\text{ has $\pi$-HC}) \sim e^{-e^{-c}}.
\]
\end{cor}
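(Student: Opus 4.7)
The plan is to deduce the corollary directly from Theorem~\ref{thm2} via a standard Poisson approximation for the number of low-degree vertices. Since containing a $\pi$-HC is a monotone increasing property and the number of arcs in $\cD_{n,p}$ is sharply concentrated about its mean $n(n-1)p$, standard sandwiching between $\cD_{n,p}$ and $\cD_{n,m}$ together with Theorem~\ref{thm2} yields that, a.a.s., $\cD_{n,p}$ contains a $\pi$-HC if and only if every vertex has total degree (in-degree plus out-degree) at least $2$. It therefore suffices to compute the asymptotic probability that the minimum total degree in $\cD_{n,p}$ is at least $2$.

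Let $X$ denote the number of vertices of total degree at most $1$ in $\cD_{n,p}$. For any fixed vertex $v$, the $2(n-1)$ potential arcs incident to $v$ appear independently with probability $p$, so the total degree of $v$ is distributed as $\Bin(2(n-1),p)$. With $2np \sim \log n + \log\log n + c$, a routine estimation gives
\[
\pr(\deg(v) = 0) \sim e^{-2np} = \frac{e^{-c}}{n\log n}, \qquad \pr(\deg(v) = 1) \sim 2np \, e^{-2np} \sim \frac{e^{-c}}{n},
\]
and summing over the $n$ vertices yields $\ex[X] \sim e^{-c}$.

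To conclude, I would verify that $X$ converges in distribution to $\mathrm{Poisson}(e^{-c})$, for instance by the method of factorial moments or by Chen--Stein. For distinct vertices $v_1,\ldots,v_r$, the events $\{\deg(v_i)\le 1\}$ are asymptotically independent, because the $O(r^2)$ potential arcs inside $\{v_1,\ldots,v_r\}$ contribute only a vanishing perturbation to the individual degree distributions; thus $\ex[X(X-1)\cdots(X-r+1)] \sim e^{-rc}$ for every fixed $r\ge1$. Poisson convergence then gives $\pr(X = 0) \to e^{-e^{-c}}$, which combined with the first paragraph yields the advertised asymptotic for $\pr(\cD_{n,p} \text{ has a } \pi\text{-HC})$. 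No serious obstacle is expected at this stage: the substantive content sits entirely in Theorem~\ref{thm2}, and the present argument is routine random-digraph bookkeeping.
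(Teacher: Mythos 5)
Your proposal is correct and follows the same route as the paper: the paper deduces Corollary~\ref{cor2} directly from Theorem~\ref{thm2} and the Poisson calculation of Lemma~\ref{lem:PH}, which computes $\pr(\cD_{n,p}\text{ has min total degree}\ge 2)\sim e^{-e^{-c}}$ exactly as you do. The only cosmetic difference is that the paper's Lemma~\ref{lem:PH} estimates the expected number of vertices of total degree exactly $1$ and notes separately that degree-$0$ vertices are negligible, whereas you lump degrees $0$ and $1$ together; the hitting-time-to-$\cD_{n,p}$ transfer is treated as routine in both.
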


\begin{remark}
Let us point out that there are different coefficients in the $\log\log n$ terms in Corollaries~\ref{cor1} and~\ref{cor2}. It follows from the fact that having a vertex with both in-degree and out-degree equal to 1 prevents there being a Hamilton cycle with the alternating pattern, but it does \emph{not} prevent any other pattern. See Lemma~\ref{lem:PH} to see how this affects the probability threshold. 
\end{remark}

\section{Idea of the Proof}\label{idea}

We start with a na\"\i ve description of the argument.
Let $k \ge 1$ be any fixed natural number, and let $\pi$ be any pattern of length $k$.
Suppose that we wish to find a $\pi$-Hamilton cycle in a given random digraph on $n$ vertices, where $n$ is divisible by $k$.
Here is one simple approach:
first partition the $n$ vertices into $k$ bins, $B_1,\ldots,B_k$, so that each bin receives exactly $n/k$ vertices (this is done arbitrarily, before examining the edges); then build a cycle $(v_1,\ldots,v_n,v_1)$, where each $v_i \in B_i$ and each $v_i v_{i+1}$ is oriented as $\pi_i$. (Here the indices of $B_i$ and $\pi_i$ should be interpreted modulo $k$). This gives us the desired $\pi$-HC.
Clearly, for the above construction to succeed, every vertex $v$ in bin $B_i$ should have some neighbours $w\in B_{i-1}$ and $w'\in B_{i+1}$ such that the edges $vw$ and $vw'$ are appropriately oriented.
Unfortunately, it is easy to see that for the $\cD_{n,p}$ model the above property fails a.a.s.\ when $k\ge2$ and $p\sim\log n/(2n)$ (which is the sharp threshold for the degree constraints given in Theorems~\ref{thm1} and~\ref{thm2}). 
This is due to the presence of vertices $v$ such that $v$ and all its neighbours belong to the same bin.
To overcome this obstacle, in this section we will analyze a simpler model of random digraphs for which the above na\"\i ve construction of a $\pi$-HC works.
Later in Section~\ref{cycles}, we will show how to modify $\cD_{n,p}$ (by moving a few vertices to different bins and contracting some short paths) so that the resulting random graph can be analyzed by means of this simpler model.

We consider a model that generalizes the $D_{s\text{-in}, t\text{-out}}$ model of~\cite{CF} as follows. Let $S=(s_{i,j})_{1\le i,j\le k}$ and $T=(t_{i,j})_{1\le i,j\le k}$ be two $k\times k$ matrices with entries in $\nat\cup\{0\}$. We construct the random digraph $D_{S\text{-in},T\text{-out}}$  as follows. We have $n$ vertices placed in $k\ge 1$ bins as explained above. For each vertex $v$ in bin $B_i$, we add $s_{i,j}$ distinct in-arcs to $v$ (that is, with head vertex $v$) and tail vertices chosen uniformly at random (u.a.r.) from $B_j \setminus \{v\}$. In other words, each of the $\binom{n/k}{s_{i,j}}$ choices (or $\binom{n/k-1}{s_{i,j}}$ choices if $i=j$) is equally likely. Similarly, for each vertex $v$ in bin $B_i$, we add $t_{i,j}$ distinct out-arcs from $v$ (that is, with tail vertex $v$) and head vertices chosen u.a.r.\ from $B_j \setminus \{v\}$. Let us note that $D_{S\text{-in}, T\text{-out}}$ is, in general, a multi-digraph since an arc $vw$ may be generated once as an out-arc from $v$ and once as an in-arc to $w$.

\medskip 
We start with the following observation.

\begin{lem}\label{lem1}
Let $S=T=\begin{pmatrix}0&2\\2&0\end{pmatrix}$. Then a.a.s.\ $D_{S\text{-in},T\text{-out}}$ has one perfect matching directed from $B_1$ to $B_2$ and one directed from $B_2$ to $B_1$.
\end{lem}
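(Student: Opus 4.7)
The plan is to deduce each of the two required perfect matchings from the known fact that the random $2$-out bipartite graph a.a.s.\ admits a perfect matching, and to combine the two events with a union bound.

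I focus first on the matching from $B_1$ to $B_2$. Let $H$ be the bipartite graph on $B_1\cup B_2$ whose edges are those pairs $\{u,w\}$, with $u\in B_1$ and $w\in B_2$, for which $D_{S\text{-in},T\text{-out}}$ contains the arc $u\to w$. Such an arc is generated in two independent ways: from one of the $t_{1,2}=2$ out-arcs attached to $u$ (so $u$ picks $w$ uniformly at random in $B_2$), or from one of the $s_{2,1}=2$ in-arcs attached to $w$ (so $w$ picks $u$ uniformly at random in $B_1$). Hence $H$ contains, as a spanning subgraph, the random bipartite graph on two parts of size $n/2$ in which every vertex on each side chooses $2$ distinct neighbours uniformly at random from the opposite side. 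By the classical theorem of Walkup (1980), this ``two-sided $2$-out'' bipartite graph a.a.s.\ admits a perfect matching, and therefore so does $H$; this yields the desired matching directed from $B_1$ to $B_2$.

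An identical argument, with the roles of the two bins swapped (and now using $t_{2,1}=s_{1,2}=2$), applied to the bipartite graph $H'$ whose edges are the arcs directed from $B_2$ to $B_1$, produces a perfect matching directed from $B_2$ to $B_1$ a.a.s. A union bound over these two a.a.s.\ events finishes the proof.

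The only nontrivial ingredient is Walkup's theorem itself. If one prefers to avoid citing it and instead verify Hall's condition directly, the main obstacle is the intermediate range $\omega(1)\le|X|\le o(n)$ of potential violators $X\subseteq B_1$: the naive first-moment bound over minimal pairs $(X,N_H(X))$ does not converge there. This is standardly handled by exploiting the two-sided $2$-out structure (for instance through a core-peeling argument, or a second-moment refinement on minimal bad sets whose induced bipartite subgraph has each $B_1$-vertex of degree exactly $2$ and each $B_2$-vertex of degree at least $2$), together with K\"onig duality to reduce the regime $|X|\ge n/4$ to a small-subset regime on the opposite side of the bipartition.
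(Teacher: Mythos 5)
Your proof is correct and takes essentially the same route as the paper: both observe that the arcs from $B_1$ to $B_2$ (coming from the $t_{1,2}=2$ out-choices on $B_1$ and the $s_{2,1}=2$ in-choices on $B_2$) form an independent copy of Walkup's two-sided $2$-out bipartite graph, similarly for the reverse direction, and then invoke Walkup's theorem together with a union bound. The closing paragraph sketching a self-contained replacement for Walkup's theorem is not part of the paper's argument but is a reasonable aside.
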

\begin{proof}
Walkup~\cite{W} (see Theorem 17.6 in~\cite{FK}) proved that the following model of a random bipartite graph has a perfect matching a.a.s: given disjoint sets $A,B$ of size $m$, each vertex of $A$ chooses 2 random neighbours in $B$ and each vertex of $B$ chooses 2 random neighbours in $A$. This is a random graph with $4m$ edges.  The random graph $D_{S\text{-in},T\text{-out}}$ consists of two independent copies of Walkup's graph, with orientations preserved, and so the conclusion follows immediately. 
\end{proof}

With one additional ingredient, namely, the fact that $D_{2\text{-in},2\text{-out}}$ a.a.s.\ has a directed Hamilton cycle, we get the following useful lemma. 
For technical reasons we disregard the case $k=2$, and restrict our attention to patterns of length $k\ge3$ (see Remark~\ref{rem:no2} below). Moreover, we do not assume patterns to be primitive so, in particular, the lemma applies to $\pi=(\ra,\la,\ra,\la)$.
\begin{lem}\label{mainlemma}
Let $k \ge 3$ be any natural number and let $\pi=(\pi_1,\ldots,\pi_k)$ be any pattern. 
Let $S=(s_{i,j})_{1\le i,j\le k}$ and $T=(t_{i,j})_{1\le i,j\le k}$ be $k\times k$ matrices with $s_{i,i+1}=s_{i+1,i}=t_{i,i+1}=t_{i+1,i}= 2$. Then a.a.s. $D_{S\text{-in},T\text{-out}}$ has a Hamilton cycle $(v_1,\ldots,v_n)$ with $v_i \in B_j$ when $i\equiv j\mod k$ and with each $v_i v_{i+1}$ oriented as $\pi_i$.
\end{lem}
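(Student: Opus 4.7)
The plan is to construct the $\pi$-HC in two stages: first, build $n/k$ vertex-disjoint directed paths that together span $V(D_{S\text{-in},T\text{-out}})$, each visiting $B_1,B_2,\ldots,B_k$ in order and oriented according to the prefix $\pi_1\pi_2\cdots\pi_{k-1}$; then, splice these paths together into a single Hamilton cycle using arcs between $B_k$ and $B_1$ oriented as $\pi_k$.

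For the first stage, fix $i\in\{1,\ldots,k-1\}$ and consider the sub-digraph of $D_{S\text{-in},T\text{-out}}$ consisting of arcs with both endpoints in $B_i\cup B_{i+1}$. Because $s_{i,i+1}=s_{i+1,i}=t_{i,i+1}=t_{i+1,i}=2$, this sub-digraph has exactly the distribution covered by Lemma~\ref{lem1}, which provides a.a.s.\ a perfect matching in each of the two orientations between the two bins; I pick $M_i$ to be the one oriented as $\pi_i$. The $k-1$ sub-digraphs are mutually independent and $k$ is a constant, so a union bound furnishes $M_1,\ldots,M_{k-1}$ simultaneously a.a.s. Composing the $M_i$'s yields, for each $v\in B_1$, a directed path $P_v$ from $v$ to some $\psi(v)\in B_k$ that visits every bin exactly once and follows $\pi_1\pi_2\cdots\pi_{k-1}$. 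The induced map $\psi:B_1\to B_k$ is a bijection, and the family $\{P_v\}_{v\in B_1}$ partitions the vertex set.

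For the second stage, reveal the $\pi_k$-oriented arcs between $B_k$ and $B_1$; this uses a pair of bins not involved in the first stage (crucially requiring $k\ge 3$, since for $k=2$ the ``closing'' arcs would lie between the same pair used in stage one and could not be treated as independent), so the arcs are independent of everything done so far. Define an auxiliary digraph $D^*$ on vertex set $B_1$ by declaring $u\to v$ in $D^*$ whenever the arc between $\psi(u)$ and $v$ oriented as $\pi_k$ appears in $D_{S\text{-in},T\text{-out}}$. Since $\psi$ is a bijection independent of the newly exposed arcs, and since each vertex of $B_k$ contributes $2$ uniform random out-neighbors in $B_1$ (or in-neighbors, if $\pi_k=\la$), while each vertex of $B_1$ contributes $2$ uniform random in-neighbors from $B_k$ (resp.\ out-neighbors), the pullback $D^*$ is essentially distributed as $D_{2\text{-in},2\text{-out}}$ on $n/k$ vertices. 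By the Cooper--Frieze theorem~\cite{CF}, $D_{2\text{-in},2\text{-out}}$ has a directed Hamilton cycle a.a.s., so $D^*$ admits a Hamilton cycle $u_1\to u_2\to\cdots\to u_{n/k}\to u_1$; concatenating $P_{u_1},P_{u_2},\ldots,P_{u_{n/k}}$, each consecutive pair linked by the witnessing arc between $\psi(u_j)$ and $u_{j+1}$, gives the desired $\pi$-HC.

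The main technical nuisance is that $D^*$ is not \emph{literally} $D_{2\text{-in},2\text{-out}}$: the out-neighbors of $u\in B_1$ in $D^*$ come from the out-neighbors in $B_1$ of $\psi(u)\in B_k$ and hence are sampled from all of $B_1$ rather than $B_1\setminus\{u\}$; in addition, small numbers of parallel or loop arcs may appear in $D^*$. Each such defect contributes $O(1)$ arcs in expectation and does not affect the Hamilton cycle conclusion, either by citing a slightly more general version of the Cooper--Frieze result or by a routine coupling between $D^*$ and an honest $D_{2\text{-in},2\text{-out}}$ on the same vertex set.
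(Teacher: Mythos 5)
Your proof is correct and matches the paper's argument essentially step for step: both build $n/k$ disjoint bin-traversing paths from the perfect matchings of Lemma~\ref{lem1}, contract them (your bijection $\psi$ and auxiliary digraph $D^*$ are exactly the path-contraction in disguise), and close the Hamilton cycle by applying the Cooper--Frieze result that $D_{2\text{-in},2\text{-out}}$ is a.a.s.\ Hamiltonian. Two small notes: the Hamiltonicity result you invoke is~\cite{CF2}, not~\cite{CF}; and your explicit acknowledgement that $D^*$ may have $O(1)$ loops and parallel arcs (so it is only ``essentially'' $D_{2\text{-in},2\text{-out}}$, needing a routine coupling) is a technical point the paper's proof glosses over.
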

\begin{proof}
Using Lemma~\ref{lem1}, we find a family of perfect matchings between $B_i$ and $B_{i+1}$ directed as $\pi_i$ for each $i=1,\ldots,k-1$. This gives a partition of the vertex set into sets that induce paths of length $k-1$. Each of these paths $P=(u_1,\ldots, u_k)$ has $u_i\in B_i$ and $u_iu_{i+1}$ oriented as $\pi_i$. We contract each path into a single vertex. Without loss of generality, we may assume that $\pi_k=\ \ra$; otherwise, a symmetric argument can be applied. It suffices to find a directed Hamilton cycle on the contracted paths, so that each edge is oriented from the $B_k$ end of one path to the $B_1$ end of the next one. In order to do this, given each path $P=(u_1, \ldots, u_k)$, note there are $t_{k,1}=2$ out-arcs from $u_k$ to a random vertex in $B_1$ and $s_{1,k}=2$ in-arcs to $u_1$ from a random vertex in $B_k$. Keeping only those arcs, we get a $D_{2\text{-in},2\text{-out}}$ graph on the vertices associated with the contracted paths. It was shown by Cooper and Frieze~\cite{CF2} that $D_{2\text{-in},2\text{-out}}$ a.a.s.\ has a directed Hamilton cycle. Un-contracting the paths gives us the desired $\pi$-Hamilton cycle of the original graph.
\end{proof}

\begin{remark}\label{rem:no2}
The proof of the lemma does not work `as is' for $k=2$ and $\pi=(\ra,\la)$, since the arcs between $B_1$ and $B_2$ used in the last step to complete a Hamilton cycle may have already been used in the first perfect matching between these same bins. With some extra work, one can show that the statement still holds in that scenario if we allow $S=T=\begin{pmatrix}0&4\\4&0\end{pmatrix}$.
We chose not to include this case in the statement, since it is not needed in our main argument.
\end{remark}

Lemma~\ref{mainlemma} will be useful in analyzing the random digraph process. However, the argument will be more delicate and the above proof strategy needs to be amended in order to deal with vertices of low degree that occur in the original process but not in the $D_{S\text{-in},T\text{-out}}$ one. These vertices will be ``buried'' inside paths after contracting them to ``fat'' vertices.

\section{Structural Ingredients}\label{struct}

\subsection{Bin partition}
Throughout this section, $\pi$ is a fixed primitive nontrivial pattern. We consider two different scenarios: either $\pi=(\ra,\la)$ (alternating case) or $\pi$ is a primitive pattern $(\pi_1,\ldots,\pi_k)$ of length $k\ge3$ (non-alternating case).
We will analyze several models of random digraphs with common vertex set $[n]$.
In the non-alternating case, we assume that the length of the pattern $k$ divides $n$, and partition the vertices into $k$ bins $B_1,\ldots,B_k$ in an equitable manner (that is,~every bin receives exactly $n/k$ vertices).
The alternating case is slightly different. Here we assume that $n$ is even, but partition the vertices into $4$ bins $B_1,\ldots,B_4$, so that bins $B_1,B_2$ receive $\lceil n/4\rceil$ vertices and bins $B_3,B_4$ receive $\lfloor n/4\rfloor$ vertices.
By setting $k=4$, the alternating case can be intuitively regarded as if $\pi=(\ra,\la,\ra,\la)$ instead of $(\ra,\la)$, with the weaker requirement that $n$ is even but not necessarily divisible by $4$ (see Remark~\ref{rem:nonprimitive}).
In either case, the partition of the vertex set into bins is done arbitrarily before examining the arcs of any of the random digraphs.

\subsection{Necessary conditions}

We now describe what will turn out to be the main obstruction to the existence of a $\pi$-HC in $\cD_{n,p}$.
In the non-alternating case (that is, $\pi$ primitive of length $k\ge3$), let $\cA$ denote the event that $\cD_{n,p}$ has no vertices of total degree less than $2$. In the alternating case, we require a stronger condition, and define $\cA$ to be the event that every vertex in $\cD_{n,p}$ has in-degree at least $2$ or out-degree at least $2$. Note that in either case $\cA$ is trivially a necessary condition for the existence of $\pi$-HC.

The following result is a standard exercise in the field of random graphs, so we just sketch the main steps of the proof. 
\begin{lem}\label{lem:PH}
Let $c\in\real$ be any fixed constant.
\begin{enumerate}
\item
For $\pi=(\ra,\la)$ and with $p_{\text{alt}} =p_{\text{alt}}(n) = \frac{\log n + 2\log\log n +c}{2n}$
\[
\pr(\cD_{n,p_{\text{alt}}} \text{ satisfies } \cA) \sim e^{-e^{-c}/4}.
\]
\item
For a primitive pattern $\pi$ of length $k\ge 3$ and with $p_{\text{non-alt}} = p_{\text{non-alt}}(n) = \frac{\log n + \log\log n + c}{2n}$, 
\[
\pr(\cD_{n,p_{\text{non-alt}}} \text{ satisfies } \cA) \sim e^{-e^{-c}}.
\]
\end{enumerate}
\end{lem}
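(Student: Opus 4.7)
The plan is to apply the standard Poisson paradigm to the number of ``bad'' vertices obstructing $\cA$. In the non-alternating case, let $X$ count the vertices of $\cD_{n,p_{\text{non-alt}}}$ with total degree at most $1$; in the alternating case, let $Y$ count the vertices of $\cD_{n,p_{\text{alt}}}$ with both in-degree at most $1$ \emph{and} out-degree at most $1$. In each case $\cA$ holds iff the corresponding counter is zero, so it suffices to show that $X$ and $Y$ are asymptotically Poisson with means $e^{-c}$ and $e^{-c}/4$, respectively.

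For the first moment, in the non-alternating case the total degree of a fixed vertex $v$ is $\Bin(2(n-1),p)$, giving
\[
\pr(\text{total deg}(v)\le 1) = (1-p)^{2(n-1)} + 2(n-1)p\,(1-p)^{2(n-1)-1} \sim (1+2np)\,e^{-2np}.
\]
Substituting $2np \sim \log n + \log\log n + c$ yields $\ex[X] \sim e^{-c}$. In the alternating case, the in- and out-degrees of $v$ are independent $\Bin(n-1,p)$ variables, so
\[
\pr(v \text{ is bad}) = \bigl((1-p)^{n-1} + (n-1)p(1-p)^{n-2}\bigr)^{2} \sim (1+np)^{2}\,e^{-2np},
\]
and with $2np \sim \log n + 2\log\log n + c$ this gives $\ex[Y] \sim e^{-c}/4$. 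The different $\log\log n$ coefficients fall out naturally here: in the non-alternating case one needs $np \cdot e^{-2np} = \Theta(1/n)$, whereas in the alternating case the squared factor forces $(np)^2 e^{-2np} = \Theta(1/n)$, shifting the required value of $np$ by $\tfrac12\log\log n$.

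I would then confirm the Poisson heuristic by the method of factorial moments: for every fixed $r\ge1$, the goal is to show $\ex[X_{(r)}] \to (e^{-c})^{r}$ and $\ex[Y_{(r)}] \to (e^{-c}/4)^{r}$. Writing
\[
\ex[X_{(r)}] = \sum_{v_1,\ldots,v_r \text{ distinct}} \pr(v_1,\ldots,v_r \text{ all bad}),
\]
the essential observation is that the events ``$v_i$ is bad'' depend on essentially disjoint sets of arcs, the only overlap being the at most $2\binom{r}{2}=O(1)$ potential arcs among the $v_i$'s themselves. Exposing those arcs first and then using independence of the remaining incidences (which involve the $n - r$ vertices outside the tuple), the joint probability factors as $(1+o(1))\prod_{i=1}^{r}\pr(v_i\text{ bad})$; summing over the $\sim n^{r}$ ordered tuples gives the claimed limits. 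An identical argument handles $Y_{(r)}$. Brun's sieve (or, equivalently, convergence of factorial moments to those of a Poisson) then delivers $\pr(X=0)\to e^{-e^{-c}}$ and $\pr(Y=0)\to e^{-e^{-c}/4}$.

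The only mildly subtle step is the joint probability estimate: one must check that the correction from forbidden arcs between the $v_i$'s contributes only a $1+o(1)$ factor (it amounts to $(1-p)^{O(r^2)} = 1 - O(p) = 1-o(1)$), and that tuples in which some pair $v_i,v_j$ is forced to share arcs do not inflate $\pr(v_i,v_j\text{ both bad})$ beyond $(1+o(1))\pr(v_i\text{ bad})\pr(v_j\text{ bad})$. Beyond this bookkeeping, the entire argument reduces to elementary binomial tail asymptotics in the regime $np \sim \tfrac{\log n}{2}$, which is why the authors treat it as a standard exercise and only sketch the main steps.
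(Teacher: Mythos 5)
Your proposal is correct and follows essentially the same approach as the paper's (sketched) proof: compute the first moment of the relevant count of obstructing vertices and confirm the Poisson limit via factorial moments, so that $\pr(\cA)$ is asymptotically $e^{-\lambda}$ for the appropriate $\lambda$. The only cosmetic difference is that you count vertices with total degree at most $1$ (resp.\ in- and out-degree both at most $1$) directly, whereas the paper counts those with degree exactly $1$ (resp.\ in- and out-degree both exactly $1$) and separately argues that the remaining degenerate cases contribute $o(1)$; both bookkeeping choices give the same limits.
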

\begin{proof}[Proof (sketch)]
Let us first consider the case $\pi=(\ra,\la)$. Let $X$ denote the number of vertices in $\cD_{n,p_{\text{alt}} }$ with in-degree $1$ and out-degree $1$. Easy computations show that
\[
\ex X \sim n^3 p_{\text{alt}}^2(1- p_{\text{alt}} )^{2n} \sim \frac {n \log^2 n}{4} \exp \Big( - (\log n + 2 \log \log n + c) \Big) = e^{-c}/4.
\]
Similarly, the $i$th factorial moment satisfies $\ex [ (X)_i ] \sim (e^{-c}/4)^i$ for each fixed $i\in\nat$. As a result, it follows that $X$ is asymptotically Poisson with mean $\lambda := e^{-c}/4$. In particular, $\pr(X=0) \sim e^{-\lambda}$. Moreover, the expected number of vertices of total degree at most $1$ is equal to $O(1/\log n)=o(1)$, and so
\[
\pr(\cA) = \pr(X=0) + o(1) \sim e^{-e^{-c}/4}.
\]

The non-alternating case is very similar. This time we need to investigate random variable $Y$, the number of vertices in $\cD_{n,p_{\text{non-alt}} }$ with total degree $1$. We get that
$$
\ex Y \sim n^2 (2p_{\text{non-alt}}) (1- p_{\text{non-alt}})^{2n} \sim n \log n \exp \Big( - (\log n + \log \log n + c) \Big) = e^{-c},
$$
and so
$$
\pr(\cA) = \pr(Y=0) + o(1) \sim e^{-e^{-c}}.
$$
The details are left for the reader.
\end{proof}

\subsection{In- and out-arcs}

For each $p=p(n)\in[0,1]$, we can view $\cD_{n,p}$ as the union of two independent copies of $\cD_{n,p'}$, denoted $\Din_{n,p'}$ and $\Dout_{n,p'}$, where $p'=p'(n)$ is chosen such that
\[
2p' - {p'}^2 = p.
\]
This is a standard, simple but useful, observation called \emph{two-round exposure} or \emph{sprinkling}. The arcs in $\Din_{n,p'}$ receive label``in'', and we will call them {\em in-arcs}. Likewise, the arcs in $\Dout_{n,p'}$ are labeled ``out'', and will be called {\em out-arcs}.
Note that an arc $\overrightarrow{vw}$ may appear in both $\Din_{n,p'}$ and $\Dout_{n,p'}$,
which creates a parallel pair of arcs, one in-arc $\overrightarrow{vw}$ and one out-arc $\overrightarrow{vw}$.
This is useful for the purpose of building a Hamilton cycle, since we may use either copy of $\overrightarrow{vw}$ for the cycle.
By forgetting the labels and merging parallel pairs into single arcs, we recover the usual distribution of $\cD_{n,p}$.
Hence, we regard $\cD_{n,p}$ as a simple digraph, where each arc has label ``in'', ``out'', or both.

Furthermore, we wish to build the standard random digraph process $(\cD_{n,p})_{0\le p\le 1}$ in a way that is compatible with the arc labels. To do so, we introduce a sequence $(X^{\text{in}}_{v,w}, X^{\text{out}}_{v,w})_{v,w\in[n], v\ne w}$ of i.i.d.\ random variables uniformly distributed in $[0,1]$. Then, for each pair of different vertices $v,w$, we include arc $\overrightarrow{vw}$ in $\Din_{n,p'}$ (or in $\Dout_{n,p'}$)  if $X^{\text{in}}_{v,w} \le p'$ (or, respectively, $X^{\text{out}}_{v,w} \le p'$).
Hence, setting $\cD_{n,p}=\Din_{n,p'}\cup\Dout_{n,p'}$ (and merging parallel pairs into single arcs), we obtain three random processes $(\cD_{n,p})_{0\le p\le 1}$, $(\Din_{n,p'})_{0\le p'\le 1}$ and $(\Dout_{n,p'})_{0\le p'\le 1}$ with the usual couplings: $\cD_{n,p_1}\subseteq \cD_{n,p_2}$, $\Din_{n,p'_1}\subseteq \Din_{n,p'_2}$ and $\Dout_{n,p'_1}\subseteq \Dout_{n,p'_2}$ for $0\le p_1\le p_2\le 1$.
Note that, if $X^{\text{in}}_{v,w} = p_1' < p_2'= X^{\text{out}}_{v,w}$, we say that arc $\overrightarrow{vw}$ appears in $(\cD_{n,p})_{0\le p\le 1}$ at time $p_1$ with label ``in'' and gets a second label ``out'' at time $p_2$. However, if we choose to ignore labels, then arc $\overrightarrow{vw}$ simple appears in $(\cD_{n,p})_{0\le p\le 1}$ at time $p_1$.

\medskip

In the context of the random digraph process $(\cD_{n,p})_{0\le p\le 1}$, we consider random variable
\[
p_* = \min \{ p\in[0,1] : \cD_{n,p} \text{ satisfies } \cA\}.
\]
Let $\om:=\log\log\log n = o(\log \log n)$ and let
\begin{equation}\label{eq:ppm}
p_{\pm} = \begin{cases}
\dfrac{\log n + 2\log\log n \pm\om}{2n}
& \text{if $\pi$ is the alternating pattern ($k=2$)},
\\\\
\dfrac{\log n + \log\log n \pm\om}{2n}
& \text{otherwise ($k\ge3$)}.
\end{cases}
\end{equation}
In particular, by Lemma~\ref{lem:PH}, a.a.s.\ $p_- < p_* < p_+$.

In the remainder of this section, we will analyze the two random digraph processes $(\Din_{n,p'})_{p'_-\le p'\le  p'_+}$ and $(\Dout_{n,p'})_{p'_-\le p'\le  p'_+}$, where $2p'_\pm - {p'_\pm}^2 = p_\pm$. These two processes, in turn, determine the process $(\cD_{n,p})_{p_-\le p\le  p_+}$, as explained above. For each ``time'' $p'\in [p'_-, p'_+]$, we associate out-arcs (that is, arcs in $\Dout_{n,p'}$) to their tail vertex and in-arcs to their head vertex. More precisely, if $\overrightarrow{vw}$ is an out-arc with tail at $v$ and head at $w$, then we say that it is {\em visible} from $v$ and {\em invisible} from $w$. Similarly, if $\overrightarrow{vw}$ is an in-arc, then it is {\em invisible} from $v$ and {\em visible} from $w$.
Later in the argument, we will expose the endpoint of each arc from which the arc is visible and leave the other endpoint random.
This trick will be useful to emulate the  $D_{S\text{-in},T\text{-out}}$ model.

\subsection{Useful properties}

Recall that vertices are equitably partitioned into bins $B_1,\ldots,B_k$ before exposing any random arcs.
(In the alternating case, $k=4$ and $|B_1|=|B_2|=\lceil n/4\rceil$ while $|B_3|=|B_4|=\lfloor n/4\rfloor$.)
We use this partition to classify vertices into three different types (good, bad, or dangerous): 
\begin{itemize}
\item
We say a vertex $v$ is {\em good} if, for every $i\in[k]$, $v$ has at least $4k+2$ visible out-arcs (that is,~out-arcs visible from $v$) in $\Dout_{n,p'_-}$ whose head vertex is in bin $B_i$ and also at least $4k+2$ visible in-arcs in $\Din_{n,p'_-}$ whose tail is in $B_i$. (Note that this property is required for each bin, including the bin vertex $v$ belongs to.) 
\item
We say that $v$ is {\em bad} if it is not good, but its total degree in $\cD_{n,p_-}=\Din_{n,p'_-}\cup\Dout_{n,p'_-}$ (including all arcs that are incident with $v$, visible and invisible, and ignoring labels) is at least $4k+3$.
\item
The remaining vertices, which have total degree at most $4k+2$, are called {\em dangerous}.
\end{itemize}
The above classification of vertices is based solely on the two digraphs $\Din_{n,p'_-}$ and $\Dout_{n,p'_-}$. It remains invariant throughout $(\Din_{n,p'})_{p'_-\le p'\le  p'_+}$, $(\Dout_{n,p'})_{p'_-\le p'\le  p'_+}$, and hence  $(\cD_{n,p})_{p_-\le p\le  p_+}$. In other words, if a vertex is good/bad/dangerous at time $p_-$ it stays of this type during any time $p \in[p_-,  p_+]$.

\medskip

Given a digraph $\cD$, the {\em $\cD$-distance} between two vertices of $\cD$ denotes the usual graph distance in the underlying undirected graph. 
Let $\cH$ (for handsome) be the family of all digraphs $\cD$ with $\cD_{n,p_-} \subseteq \cD \subseteq \cD_{n,p_+}$ satisfying:
\begin{enumerate}[{\bf H1:}]
\item
Every vertex in $\cD$ has fewer than $4k$ vertices that are not good within $\cD$-distance $10k$.
\item
Every dangerous vertex in $\cD$ has only good vertices within $\cD$-distance $10k$.
\item
Every cycle (with any orientation, any labels on the associated arcs, and including cycles of length $2$) in $\cD$ of length at most $10k$ has only good vertices within $\cD$-distance $10k$.
\end{enumerate}
Note that distances in {\bf H1}--{\bf H3} are measured with respect to digraph $\cD$, whereas the definitions of being good, bad, and dangerous depend only on arcs that are already present in $\cD_{n,p_-} \subseteq \cD \subseteq \cD_{n,p_+}$.
As a result, $\cH$ is a monotone decreasing family of digraphs. 

\medskip

We will show now that random directed graphs are typically handsome. 

\begin{lem}\label{lem:handsome}
A.a.s.\ $\cD_{n,p}\in \cH$ for all $p_-\leq p\leq p_+$.
\end{lem}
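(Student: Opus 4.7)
Since {\bf H1}, {\bf H2}, and {\bf H3} all become easier when arcs are removed (distances can only grow, short cycles can only be destroyed, and being good/bad/dangerous is determined solely by $\cD_{n,p_-}$), the family $\cH$ is monotone decreasing in arcs, and it suffices to show that $\cD_{n,p_+}$ satisfies {\bf H1}--{\bf H3} a.a.s. The two main probabilistic inputs are (a) Poisson-type tail bounds for the number of visible in-arcs (resp.~out-arcs) of a given vertex $v$ to a given bin $B_j$ in $\Din_{n,p'_-}$ (resp.~$\Dout_{n,p'_-}$), each of which is binomial with mean $\sim\log n/(4k)$, yielding $\Pr[v\text{ not good}]=O_k((\log n)^{4k+1}n^{-1/(4k)})$ and $\Pr[v\text{ dangerous}]=O_k((\log n)^{4k+2}n^{-1})$; and (b) the standard bound that, a.a.s., $|B_{\cD_{n,p_+}}(v,10k)| \le (\log n)^{10k+o(1)}$ for every $v$, which follows from a first-moment estimate on neighborhood sizes.

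Properties {\bf H2} and {\bf H3} follow by routine first-moment / union bound arguments. For {\bf H2}, the expected number of pairs $(u,w)$ with $u$ dangerous, $w$ not good, and $\text{dist}_{\cD_{n,p_+}}(u,w)\le 10k$ is at most $n\cdot\Pr[\text{dangerous}]\cdot(\log n)^{10k}\cdot\Pr[\text{not good}]=o(1)$. For {\bf H3}, since the expected number of cycles of length at most $10k$ in $\cD_{n,p_+}$ is polylogarithmic in $n$, the expected number of (short cycle, non-good vertex at $\cD$-distance $\le 10k$ of the cycle) pairs is similarly $o(1)$.

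Property {\bf H1} is the main obstacle. I plan to count pairs $(v,S)$ with $|S|=4k$, $S\subseteq B_{\cD_{n,p_+}}(v,10k)$, and every vertex of $S$ non-good. Enumerating the connecting trees of size at most $1+40k^2$ via Cayley's formula, and using the FKG inequality to decouple tree existence (an increasing event in $\cD_{n,p_+}$) from non-goodness (a decreasing event in $\cD_{n,p_-}$), gives a first-moment bound of the form
\[
n\cdot\binom{n}{4k}\cdot O_k\!\bigl((\log n)^{O(k^2)}\,n^{-4k}\bigr)\cdot \bigl(n^{-1/(4k)+o(1)}\bigr)^{4k}.
\]
In this product the powers of $n$ cancel, leaving only a polylogarithmic bound; refining it to $o(1)$ is the crux of the argument. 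A natural approach is to expose $\cD_{n,p_-}$ first, revealing the non-good set $U$ (which a.a.s.\ satisfies $|U|=n^{1-1/(4k)+o(1)}$), and then analyze the conditional distribution of the newly added arcs in $\cD_{n,p_+}\setminus\cD_{n,p_-}$ in order to rule out a ball of radius $10k$ containing $4k$ elements of $U$. The slack provided by the $+2$ margin in the threshold $4k+2$ (rather than the bare $4k+1$), together with the fact that the tree arcs incident to vertices of $S$ themselves contribute to the visible-arc count and thereby reduce the conditional probability of non-goodness, should furnish the extra polynomial-in-$n$ savings needed to close the gap.
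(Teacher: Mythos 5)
Your reduction to $p=p_+$ via monotonicity, and your first-moment treatment of \textbf{H2} and \textbf{H3}, mirror the paper's argument. The interesting difference is your estimate of the not-good probability. You write $\Pr[v\text{ not good}]=O_k\bigl((\log n)^{4k+1}n^{-1/(4k)}\bigr)$, which is the correct value: the visible out-arcs (resp.\ in-arcs) from $v$ to a given bin $B_i$ live in $\Dout_{n,p'_-}$ (resp.\ $\Din_{n,p'_-}$) with arc probability $p'_-\approx p_-/2\sim\frac{\log n}{4n}$, so the count to each bin is $\mathrm{Bin}\bigl(\sim n/k,\,p'_-\bigr)$ with mean $\sim\log n/(4k)$, and the tail at $4k+1$ is $\mathrm{polylog}\cdot n^{-1/(4k)}$. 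The paper's displayed computation for \textbf{H1}, by contrast, plugs in $p_-$ rather than $p'_-$ and obtains $\exp\bigl(-(1+o(1))\tfrac{\log n}{2n}\cdot\tfrac{n}{k}\bigr)=n^{-1/(2k)+o(1)}$; this is what allows its naive union bound to close, since then $(n^{-1/(2k)+o(1)})^{4k}=n^{-2+o(1)}$ and the overall bound is $n^{-1+o(1)}$. With the correct exponent that you derive, $(n^{-1/(4k)+o(1)})^{4k}=n^{-1+o(1)}$, and the first moment for \textbf{H1} only yields a polylogarithmic bound, as you observe.

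So the genuine gap is in your \textbf{H1} argument, and your sketched repair does not close it. Conditioning on the tree arcs incident to a non-good $w\in S$ reduces the not-good probability by at most a factor of $O\bigl((\log n)^{-1}\bigr)$ per such arc (one fewer visible arc needed in at most one bin), and tree vertices typically have degree $1$ or $2$; the total saving is therefore polylogarithmic, not polynomial in $n$, so your ``extra polynomial-in-$n$ savings'' does not materialize. Likewise, the ``$+2$ margin'' in the threshold only shifts the polylog factor, not the $n^{-1/(4k)}$ exponent. The clean fix is to strengthen the constants rather than the argument: require \textbf{H1} to forbid $4k+1$ (or, more comfortably, $8k$) non-good vertices within distance $10k$, raise the goodness threshold from $4k+2$ to $4k+3$ (resp.\ $8k+2$), and propagate through \textbf{H4}, \textbf{H5}, and \textbf{U1}. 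Then $(\Pr[\text{not good}])^{4k+1}=n^{-1-1/(4k)+o(1)}$, the first-moment bound becomes $n^{-1/(4k)+o(1)}=o(1)$, and the remainder of the construction is unaffected.
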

\begin{proof}
As mentioned above, {\bf H1}--{\bf H3} are monotone decreasing properties, so it suffices to show that a.a.s.\ $\cH$ holds for $p=p_+$.
We will investigate these three events in turn.
\begin{enumerate}[{\bf H1:}]
\item Let $M=40k^2$. Then, 
\begin{align*}
\pr(\exists & \text{ a vertex of $\cD_{n,p_+}$ not satisfying {\bf H1}}) \\
\leq & \ n\sum_{\ell=4k}^M\binom{n}{\ell}(\ell+1)^{\ell-1}(2p_+)^{\ell}\binom{\ell}{4k}\brac{2k\sum_{i=0}^{4k+1} \binom{\lceil n/k\rceil}{i}p_-^i(1-p_-)^{\lfloor n/k\rfloor-M-1-i}}^{4k} \\
\le & \ n\sum_{\ell=4k}^M\bfrac{ne}{\ell}^\ell\bfrac{2(\ell+1) \log n}{n}^\ell \\
& \qquad \times \brac{ \frac{e \ell}{4k} (2k) 2 (n/k)^{4k+1} (\log n / n)^{4k+1} \exp \left( - (1+o(1)) \frac {\log n}{2n} \cdot \frac {n}{k} \right) }^{4k} \\
\le & \ n\sum_{\ell=4k}^M \brac{10 \log n}^\ell \brac{\log^{4k+1}n\cdot n^{-1/(2k)+o(1)}}^{4k} = n^{1-2+o(1)} = o(1).
\end{align*}
Indeed, there are $n$ choices for a vertex $v$ and then there will be an additional $\ell$ vertices that make up the paths, where $4k \le \ell\leq M = (4k)\times(10k)$. We choose these vertices in at most $\binom{n}{\ell}$ ways and then choose a spanning tree that is contained in the union of the paths in at most $(\ell+1)^{\ell-1}$ ways, the number of labelled trees on $\ell+1$ vertices. The factor $(2p_+)^\ell$ bounds the probability the arcs of the tree exist, {\em ignoring orientation} gives us the 2. We then choose $4k$ vertices that are not good and multiply by an upper bound for the probability that these vertices have few $\cD_{n,p_-}$ in- or out-neighbours in some of the $k$ bins outside of the set of $\ell+1 \le M + 1$ vertices chosen so far. 
\item The probability that there is a dangerous vertex $v$ and a not good vertex $w$ such that $\dist(v,w)=\ell\leq 10k$ can be bounded from above by 
\begin{align*}
& n \sum_{\ell=1}^{10k}n^{\ell}(2p_+)^\ell \brac{2k\sum_{i=0}^{4k+1} \binom{\lceil n/k\rceil}{i}p_-^i(1-p_-)^{\lfloor n/k\rfloor-\ell-1-i}} \\ 
& \qquad\qquad\qquad \times \brac{\sum_{j=0}^{4k+1}\binom{n}{j}(2p_-)^j(1-2p_-+p_-^2)^{n-2-j}} \\
& \quad \leq n \sum_{\ell=1}^{10k} \Big( (1+o(1))\log n \Big)^\ell n^{-1/(2k)+o(1)} n^{-1+o(1)} = n^{-1/(2k) + o(1)} = o(1).
\end{align*}
Indeed, there are $n$ choices for a dangerous vertex $v$ and then at most $n^{\ell}$ choices for additional vertices that form a path of length $\ell$ reaching a vertex $w$ that is not good. The next term is an upper bound for the probability that $w$ is not good. The last term is an upper bound for the probability that $v$ is dangerous. Note that $v$ already has one neighbour on the path that is already identified; hence, $j \le 4k+1$, not $4k+2$.

\item 
If there is a cycle not satisfying~{\bf H3}, then there is a set of $2\leq \ell\leq M=10k+10k = 20k$ vertices containing at least $\ell$ arcs and at least one not good vertex. The probability of this event occurring in $\cD_{n,p}$ can be bounded from above by
\multstar{
\sum_{\ell=2}^M\binom{n}{\ell}\binom{\ell^2}{\ell}(2p_+)^\ell \ell (2k) \sum_{i=0}^{4k+1} \binom{\lceil n/k\rceil}{i}p_-^i(1-p_-)^{\lfloor n/k\rfloor-M-i} \\
\leq 40k^2\sum_{\ell=2}^M\bfrac{ne}{\ell}^\ell \bfrac{\ell^2e}{\ell}^\ell \bfrac{2 \log n}{n}^\ell n^{-1/(2k)+o(1)}=n^{-1/(2k)+o(1)}=o(1).
}
\end{enumerate}
The proof of the lemma is finished.
\end{proof}

Given a digraph $\cD_{n,p_-} \subseteq \cD \subseteq \cD_{n,p_+}$ and a vertex $v$ of $\cD$, the neighbourhood $N_{\cD}(v)=N(v)$ of $v$ is the set of all vertices $w$ that are joined to $v$ by an arc (in- or out-arc, visible or invisible from $v$). We now give some useful deterministic consequences of properties {\bf H1}--{\bf H3}.

\begin{lem}\label{lem:useful}
Let $\cD$ be any digraph in $\cH$, and let $\sP_0$ be a pairwise vertex-disjoint collection of paths, where each path $P\in\sP_0$ has length at most $6k+2$ and contains exactly one non-good vertex (that is, bad or dangerous). Then
\begin{enumerate}[{\bf H1}]
\addtocounter{enumi}{3}
\item
For every dangerous vertex $v$, all the vertices in $N(v)$ are good and none of them is contained in a path in $\sP_0$.
\item
For every vertex $v$, all but at most $4k$ vertices in $N(v)$ are good and are not contained in a path in $\sP_0$. 
\end{enumerate}
\end{lem}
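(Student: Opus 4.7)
The plan is to derive H4 and H5 as purely combinatorial consequences of H1--H3, exploiting the two defining features of $\sP_0$: every path has length at most $6k+2$ and contains exactly one non-good vertex. All the distance bookkeeping reduces to the observation that $1+(6k+2)\le 10k$ and $2+(6k+2)\le 10k$ for $k\ge 2$, which is the regime we care about.

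For H4, the first claim is immediate from H2: a dangerous vertex $v$ has only good vertices within $\cD$-distance $10k$, and in particular every neighbour of $v$ is good. For the second claim, if some $w\in N(v)$ lay in a path $P\in\sP_0$, then the unique non-good vertex of $P$ would sit at $\cD$-distance at most $1+(6k+2)\le 10k$ from $v$, again contradicting H2. For H5, the strategy is to define a map $\phi$ from the ``bad'' neighbours of $v$ (i.e., vertices in $N(v)$ that are either non-good or contained in some path of $\sP_0$) into the set $U$ of non-good vertices lying within $\cD$-distance $10k$ of $v$: send a non-good $w$ to itself, and send a good $w$ lying on a path $P\in\sP_0$ to the unique non-good vertex of $P$. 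In either case $\phi(w)$ is within $\cD$-distance $1+(6k+2)\le 10k$ of $v$, so $\phi$ really does land in $U$, and H1 gives $|U|<4k$. It then suffices to show $\phi$ is injective, which yields the desired bound.

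For injectivity, suppose $\phi(w_1)=\phi(w_2)=u$ for distinct $w_1,w_2\in N(v)$. Since the paths in $\sP_0$ are vertex-disjoint and each contains a single non-good vertex, a short case analysis (both $w_i$ non-good, one equal to $u$ and the other in $u$'s path, or both good and lying in the common path containing $u$) shows that $w_1$ and $w_2$ lie within $\cD$-distance at most $6k+2$ of each other, along a walk that either contains $u$ or passes within $\cD$-distance $6k+2$ of $u$. Concatenating this walk with the edges $vw_1$ and $vw_2$ produces a cycle through $v$ of length at most $2+(6k+2)\le 10k$, and $u$ sits at $\cD$-distance at most $6k+2\le 10k$ from this cycle. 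H3 then forces $u$ to be good, contradicting $u\in U$.

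The main obstacle is this injectivity argument: one must handle each of the three subcases consistently and verify that the resulting cycle (which may have length as small as $3$) together with the location of $u$ fits inside the distance-$10k$ neighbourhood controlled by H3. Everything else is routine triangle-inequality arithmetic with the path-length bound $6k+2$.
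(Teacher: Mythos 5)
Your proof is correct and follows essentially the same route as the paper's: H4 comes directly from H2, and H5 comes from H1 combined with the H3-based observation that no path in $\sP_0$ can meet $N(v)$ in more than one vertex (the cycle-through-$v$ argument). Your explicit injection $\phi$ is exactly what the paper's terse phrase ``Then, by H1, we immediately get H5'' is leaving implicit.
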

\begin{proof}
Pick any vertex $v$. {\bf H3} implies that no path $P\in\sP_0$ intersects more than one vertex in $N(v)$, since otherwise we would create a cycle of length at most $10k$ with at least one non-good vertex within distance $10k$. Then, by {\bf H1}, we immediately get {\bf H5}. Moreover, if $v$ is dangerous, then {\bf H2} implies {\bf H4}.
\end{proof}

\section{Cycle Construction}\label{cycles}
In this section we will prove Theorems~\ref{thm1} and~\ref{thm2} by showing that a.a.s.\ $\cD_{n,p_*}$ contains a $\pi$-Hamilton cycle, where $\pi$ is either the alternating pattern $(\ra,\la)$ or a primitive pattern $(\pi_1,\ldots,\pi_k)$ of length $k\ge 3$.
For notational convenience, in the alternating case we set $k=4$ and redefine $\pi$ to be $(\ra,\la,\ra,\la)$, even though $n$ is only assumed to be even but not necessarily divisible by $4$ (see~Remark~\ref{rem:nonprimitive}). In the non-alternating case, we always require that $k\mid n$.
As in Section~\ref{struct}, the vertex set $[n]$ is equitably partitioned into $k\ge3$ bins $B_1,\ldots,B_k$ of size $|B_i|=n/k$ (in the non-alternating case) or $|B_1|=|B_2|=\lceil n/4\rceil$ and $|B_3|=|B_4|=\lfloor n/4\rfloor$ (in the alternating case).
Recall that this partition is done before exposing any random arcs.
\medskip

We will analyze the random processes $(\Din_{n,p'})_{p'_-\le p'\le p'_+}$, $(\Dout_{n,p'})_{p'_-\le p'\le p'_+}$ and $(\cD_{n,p})_{p_-\le p\le p_+}$ introduced in Section~\ref{struct}, where $\cD_{n,p} = \Din_{n,p'} \cup \Dout_{n,p'}$ for $2p'-{p'}^2=p$ (merging parallel arcs into single arcs) and where $p_-,p_+$ are defined as in~\eqref{eq:ppm}.
In view of Lemmas~\ref{lem:PH} and~\ref{lem:handsome}, we will assume that $p_- < p_* < p_+$ and 
that event $\cH$ holds in $\cD_{n,p}$ for all $p_-\le p \le p_+$. Otherwise our construction will simply fail, but this occurs with probability $o(1)$.
Note that we do not condition on these events, since this would destroy the probability space. Instead, we will expose some partial information about $(\cD_{n,p})_{p_-\le p\le p_+}$ and assume it does not contradict our assumptions above, while we leave all the unexposed information random.

\paragraph{Step 1 -- digraph $\mbf{\cD_*}$:}
We first consider digraphs $\Din_{n,p'_-}$ and $\Dout_{n,p'_-}$ (or equivalently digraph $\cD_{n,p_-}$ at time $p_-$). Recall that an in-arc $\overrightarrow{vw}$ of $\Din_{n,p'_-}$ is visible from $w$, while
an out-arc $\overrightarrow{vw}$ of $\Dout_{n,p'_-}$ is visible from $v$.
For every vertex $v$ and for every bin $B_i$, we expose the number $d^-_i(v)$ of in-arcs in $\Din_{n,p'_-}$ with head at $v$ and tail at some vertex in $B_i$ and similarly the number $d^+_i(v)$ of out-arcs in $\Dout_{n,p'_-}$ with tail at $v$ and head at some vertex in $B_i$. (In other words, we reveal the number of arcs between $v$ and $B_i$ in each orientation and which are visible from $v$.)
This allows us to identify which vertices are good without exposing actual locations of arcs. Indeed, given $d^-_i(v)$ and $d^+_i(v)$, the unexposed endvertices of the $d^+_i(v)$ out-arcs and $d^-_i(v)$ in-arcs visible from $v$ are two subsets of $B_i\setminus\{v\}$ of sizes $d^+_i(v)$ and $d^-_i(v)$ chosen independently and u.a.r.\ (and also independently of the arcs that are visible from other vertices).

For every vertex $v$ that is not good, we expose all arcs in $\cD_{n,p_-} = \Din_{n,p'_-} \cup \Dout_{n,p'_-}$ (in- and out-arcs, visible and invisible) that are incident with $v$, and label these arcs {\em discovered}.
This information determines whether $v$ is bad or dangerous.
Hence, all vertices are classified into the good, bad and dangerous types defined in Section~\ref{struct}.
We also label every vertex that is not good as {\em discovered}.
(Good vertices remain {\em undiscovered} for now, but this will change later on, as we reveal more information about $\cD_{n,p_-}$ and label some good vertices as discovered.)
For each $i\in[k]$, let $\hat B_i$ denote the set of undiscovered vertices in bin $B_i$. Note that some of the arcs that are visible from undiscovered (i.e.~good) vertices may be discovered, but these are few as we shall see.
For each undiscovered vertex $v$ and each bin $B_i$, let $u^+_i(v)$ be the number of undiscovered out-arcs visible from $v$ and whose other end is in $B_i$, and let $U^+_i(v)$ be the set of unexposed endvertices of these out-arcs.
Similarly, let $u^-_i(v)$ be the number of undiscovered in-arcs visible from $v$ and whose other end is in $B_i$, and let $U^-_i(v)$ be the set of unexposed endvertices of these in-arcs.
Property \textbf{H1} of $\cH$ guarantees the following:
\begin{enumerate}[{\bf U1:}]
\item
For any bins $B_j,B_i$ (possibly with $i=j$) and any undiscovered vertex $v\in B_j$,
\[
\displaystyle u^+_i(v) \ge d^+_i(v) -4k \geq 2,\qquad u^-_i(v) \ge d^-_i(v) -4k\ge 2.
\]
\end{enumerate}
Moreover, since the unexposed endpoint of each undiscovered arc remains random, we get:
\begin{enumerate}[{\bf U1:}]
\addtocounter{enumi}{1}
\item
$U^+_i(v)$ and $U^-_i(v)$ are two subsets of $\hat B_i\setminus\{v\}$ of sizes $u^+_i(v)$ and $u^-_i(v)$ chosen independently and u.a.r.\ (and also independently of the arcs that are visible from other vertices).
\end{enumerate}
Note that {\bf U1}--{\bf U2} are desired properties in light of Lemma~\ref{mainlemma}. We shall see that they remain valid after we expose additional arcs and slightly modify our digraph.

Unfortunately, the arcs in $\cD_{n,p_-}$ do not suffice to build a $\pi$-HC, since the necessary conditions of event $\cA$ do not hold at time $p_-$ (in view of our assumption that $p_-<p_*$).
To that effect, we expose all the arcs in $\cD_{n,p_*}$ that are incident with a dangerous vertex and label them discovered, as well. Since $p_*$ is a random time, here is a more careful description of this operation. Let us first expose the number $r$ of arcs (but not their locations) in $\cD_{n,p_+} \setminus \cD_{n,p_-}$, and list them as $e_1,\ldots,e_r$ in the order they appear in the process.
For each $1\le i\le r$, we determine whether or not $e_i$ is incident with a dangerous vertex, and if it is then we fully disclose the two endvertices of $e_i$ and label $e_i$ discovered. Otherwise, the endpoints of $e_i$ remain unknown (such $e_i$ will not play any role in our construction of the $\pi$-HC). Let $r_*$ be the smallest $i\in\{0,1,\ldots,r\}$ such that $\cD_{n,p_-} \cup\{e_1,\ldots,e_{i}\}$ satisfies the conditions in event $\cA$. From our assumption that $p_- < p_* < p_+$, we have $1\le r_*\le r-1$ and $\cD_{n,p_*} = \cD_{n,p_-} \cup\{e_1,\ldots,e_{r_*}\}$.
Let $\cD_* = \cD_{n,p_-} \cup\{e_i : \text{$e_i$ is discovered, } 1\le i\le r_*\}$. By construction $\cD_*$ satisfies $\cA$ and $\cH$ (since $\cD_* \subseteq \cD_{n,p_*} \subseteq \cD_{n,p_+}$ and $\cH$ is monotone decreasing).
Moreover, the additional newly discovered arcs in $\{e_1,\ldots,e_{r_*}\}$ are disjoint with the undiscovered arcs of $\cD_{n,p_-}$, since the latter are only incident with good vertices. Hence, the joint distribution of the sets $U^\pm_i(v)$ of unexposed endpoints of undiscovered arcs is unaffected by the additional information revealed, and in particular properties {\bf U1}--{\bf U2} are still valid.
Our goal is to build a $\pi$-HC in $\cD_*$ (or just a $(\ra,\la)$-HC when $\pi=(\ra,\la,\ra,\la)$ and $n\equiv 2\mod 4$). In the sequel, we will expose some additional information about $\cD_*$ and make slight modifications to it (such as moving some vertices from one bin to another).
By abusing slightly the notation, we will still use $\cD_*$ to denote the probability distribution of the modified random digraph given all the exposed information.

\paragraph{Step 2 -- short path collection:}
Our next step is to build a pairwise vertex-disjoint collection $\sP$ of short paths, each of which contains exactly one vertex that is not good.
To make this more precise, we first consider the case where $k\mid n$ (i.e.~either $\pi$ is non-alternating or $\pi=(\ra,\la,\ra,\la)$ with $4\mid n$). For each non-good vertex $w$ (that is, $w$ is~bad or dangerous), we will build a path $P(w)=v_1v_2\cdots v_{6k+1}$ of length $6k$ with the following properties:
$w$ is an internal vertex of $P(w)$;
all the vertices in $P(w)$ are good except for $w$ (so in particular $w\mapsto P(w)$ is a bijection between non-good vertices and paths in $\sP$);
each arc $v_iv_{i+1}$ is oriented as $\pi_i$ for $i=1,\dots, 6k$;
and each $v_i\in B_i$ for $i=1,\dots, 6k+1$. (Recall that modular arithmetic is used for both bins and pattern positions so, for instance, $v_{6k+1} \in B_1$.)
Note that in order to achieve this last property, we may have to move some vertices from one bin to another, but this will be done while keeping the bin sizes unchanged.
Moreover, only internal vertices of paths in $\sP$ may be moved to a different bin.
When $\pi=(\ra,\la,\ra,\la)$ with $n\equiv 2 \mod 4$, we define the collection of paths $\sP$ as above with the only proviso that exactly one of the paths in $\sP$ must have length $6k+2$ instead of $6k$.
In this longer path $P=v_1v_2\cdots v_{6k+3}$, each vertex $v_i\in B_i$ for $i=1,\dots, 6k+2$ but $B_{6k+3}\in B_1$ (so the path cycles $6$ times through bins $B_1,B_2,B_3,B_4$, and then visits $B_1$, $B_2$ and again $B_1$), and arcs are oriented in an alternating fashion with $v_iv_{i+1}$ oriented as $\pi_i$ for $i=1,\dots, 6k+2$.

We will show that we can build a such a collection of paths $\sP$.
Assume inductively that we have already created a sub-collection $\sP_0\subset \sP$ of such paths (possibly $\sP_0=\emptyset$ is the trivial sub-collection) and let $w_2$ be a non-good vertex not contained in any path in $\sP_0$.
Our goal is to build a new path $P(w_2)$ with all the desired properties and add it to $\sP_0$.
We will first suppose that $k\mid n$ (that includes the case when $\pi=(\ra,\la,\ra,\la)$ with $4\mid n$), and then show how to modify the construction if $\pi=(\ra,\la,\ra,\la)$ and $n\equiv 2 \mod 4$.

\paragraph{Step 2a -- starting path $\mbf{P(w_2)}$:}
Recall that $N(w_2)=N_{\cD_*}(w_2)$ is the set of neighbours of $w_2$ (that is, those vertices that are joined to $w_2$ by arcs in $\cD_*$ with any label and orientation). This set has already been exposed since $w_2$ is not good.
We claim that we can pick two different neighbours $w_1,w_3 \in N(w_2)$ that are good and are not contained in any path in $\sP_0$. 
Moreover, for the alternating pattern, we can pick $w_1,w_3$ so that arcs $w_1w_2$ and $w_3w_2$ are either both oriented out of $w_2$ or both into $w_2$.
Indeed, if $w_2$ is bad, then $|N(w_2)| \ge |N_{\cD_{n,p_-}}(w_2)| \ge 4k+3$ by the  definition of being bad. Then $N(w_2)$ contains at least three good vertices not contained in any path in $\sP_0$ by property \textbf{H5} of Lemma~\ref{lem:useful}. By the pigeonhole principle, we can pick two such vertices $w_1$ and $w_3$ that are joined to $w_2$ by arcs with the same orientation. On the other hand, if $w_2$ is dangerous, all its neighbours are good and do not intersect any path in $\sP_0$ by property \textbf{H4}. In that case, event $\cA$ guarantees the existence of appropriate vertices $w_1,w_3$ (with arcs $w_1w_2$ and $w_3w_2$ both oriented out of $w_2$ or both into $w_2$, in the alternating case).
Note that this is the only point in the construction of path $P(w_2)$ that we may use arcs in $\cD_*$ that are not in $\cD_{n,p_-}$.

\paragraph{Step 2b -- growing path $\mbf{P(w_2)}$:}
We first show that we can pick a position $j\in[k]$ in the pattern such that $w_1w_2$ and $w_2w_3$ are oriented as $\pi_{j}$ and $\pi_{j+1}$.
First suppose that arcs $w_1w_2$ and $w_3w_2$ are both oriented out of $w_2$ or both into $w_2$ (note that these are the only two possible cases in the alternating case, from our choice of $w_1$ and $w_3$).
Clearly for $\pi=(\ra,\la,\ra,\la)$ and also for any primitive pattern $\pi$ of length $k\ge3$,
there must be some $j\in[k]$ such that $\pi_{j}=\la$ and $\pi_{j+1}=\ra$ (and similarly some $j'\in [k]$ such that $\pi_{j'}=\ra$ and $\pi_{j'+1}=\la$). In either case, we can pick a position $j$ with $w_1w_2$ and $w_2w_3$ oriented as $\pi_j$ and $\pi_{j+1}$, as claimed.
For $k\ge 3$, there is an additional case to be considered. Suppose that exactly one of the arcs $w_1w_2$ and $w_3w_2$ is oriented out of $w_2$ and the other one is into $w_2$. Since $\pi$ is not the alternating pattern, there must be some $j\in[k]$ such that $\pi_{j}=\pi_{j+1}=\ra$ or $\pi_{j}=\pi_{j+1}=\la$. Hence (switching vertices $w_1$ and $w_3$ if needed), we can conclude that $w_1w_2$ and $w_2w_3$ are oriented as $\pi_{j}$ and $\pi_{j+1}$, also for this case.
Let $B_{a_1},B_{a_2},B_{a_3}$  (possibly $a_1=a_2=a_3$) be the bins containing vertices $w_1,w_2,w_3$, respectively.
We will find a path $P(w_2)=w_{-k-j+2}\cdots w_0w_1w_2w_3\cdots w_{5k-j+2}$ of length $6k$ in $\cD_*$ with the following properties: $P(w_2)$ does not intersect any path in $\sP_0$; all vertices $w_{-k-j+2},\ldots, w_{5k-j+2}$ (of course, except for $w_2$) are good; each arc $w_iw_{i+1}$ is oriented as $\pi_{i+j-1}$ for $i=-k-j+2,\dots, 5k-j+1$; and each $w_i\in B_{i+j-1}$ for $i=-k-j+2,\dots, 5k-j+2$, except for
\begin{equation}\label{eq:exceptional}
\begin{cases}
w_1\in B_{a_1}\\
w_2\in B_{a_2}\\
w_3\in B_{a_3}
\end{cases}
\qquad\text{and}\qquad
\begin{cases}
w_{a_1+2k-j+1}\in B_{j}\\
w_{a_2+3k-j+1}\in B_{j+1}\\
w_{a_3+4k-j+1}\in B_{j+2}.
\end{cases}
\end{equation}
The bins of vertices $w_i$ and $w_{a_i+(i+1)k-j+1}$ will be swapped for $i=1,2,3$ (unless $a_i=i+j-1$, in which case both $w_i$ and $w_{a_i+(i+1)k-j+1}$ will be placed correctly in the same bin $B_{a_i} = B_{i+j-1}$). Note that since $j,a_1,a_2,a_3\in[k]$ and $k\ge 3$,
\[
-k-j+2 < 1 < 2 < 3 < a_1+2k-j+1 < a_2+3k-j+1 < a_3+4k-j+1 < 5k-j+2,
\]
so in particular all six vertices $w_1,w_2,w_3,w_{a_1+2k-j+1},w_{a_2+3k-j+1},w_{a_3+4k-j+1}$ must be distinct and internal vertices of the path.
For each $i=-k-j+2,\dots, 5k-j+2$, let $B_{\a_i}$ be the bin where we wish to find vertex $w_i$. Then $\a_i=i+j-1$, with the possible exceptions described in~\eqref{eq:exceptional}.
We will grow our path $P(w_2)$ by adding vertices to each end of $w_1w_2w_3$, one vertex at a time.
We can achieve this as follows.
For each $i=3,\ldots,5k-j+1$, expose all in- and out-arcs of $\cD_*$ incident with $w_i$ (visible and invisible from $w_i$). We label all these exposed arcs as discovered and vertex $w_i$ is labelled discovered as well.
Since $w_i$ is good, it has at least $4k+2$ neighbours $w$ in bin $B_{\a_{i+1}}$ with $w_iw$ oriented as $\pi_{i+j-1}$. By property {\bf H5}, at least $2$ of these vertices are good and not in any path $P\in\sP_0\cup\{w_1w_2w_3\cdots w_i\}$. Pick one of them and call it $w_{i+1}$. By an analogous argument, we also grow the path from the other end, by adding a suitable vertex $w_{i-1}$ for each $i=1,0,-1,\ldots,-k-j+3$, and thus complete our path $P(w_2)$. This path has all the required properties, except for the fact that vertices listed in~\eqref{eq:exceptional} may be placed in the wrong bin (depending on the values of $a_1,a_2,a_3$). Hence, we swap the bins of the vertices in the path that were misplaced: that is, $w_i$ is moved to bin $B_{i+j-1}$ and $w_{a_i+(i+1)k-j+1}$ is moved to $B_{a_i}$, for $i=1,2,3$. Note that the sizes of the bins remain unchanged and thus balanced. Moreover, writing $P(w_2)=v_1v_2\cdots v_{6k+1}$ with $v_i = w_{i-k-j+1}$, we have that every vertex $v_i$ ($i=1,\ldots,6k+1$) belongs to bin $B_i$, and each arc $v_iv_{i+1}$ ($i=1,\ldots,6k$) is oriented as $\pi_i$, as required. Then, we can extend $\sP_0$ to $\sP_0\cup\{P(w_2)\}$ and inductively obtain our desired collection of paths $\sP$.

When $\pi=(\ra,\la,\ra,\la)$ and $n\equiv 2 \mod 4$, we just need to extend one of the paths $P=v_1v_2\cdots v_{6k+1}$ in $\sP$ two more steps.
When doing so, we must make sure that the two additional vertices $v_{6k+2},v_{6k+3}$ belong to the right bins and the edges $v_{6k+1}v_{6k+2}, v_{6k+2}v_{6k+3}$ have the appropriate orientations. This can be achieved by the exact same argument as above, and thus we omit the details.
Note that we have tacitly assumed that $\sP$ contains at least one path, which would not be true if all the vertices of $\cD_*$ were good\footnote{It is easy to show that a.a.s.~this does not happen, but we do not make use of this fact.}. In that case, we could still create one path by applying Step~2a to a good vertex $w_2$, and extending this path as in Step~2b.

\paragraph{Step 3 -- path contraction:}
A crucial property in the above construction of $\sP$ is that we have only exposed (and labelled discovered) those arcs that are incident with discovered vertices, which are precisely the internal vertices of the paths in $\sP$. All the other vertices remain undiscovered (and are good). Note that some arcs incident with undiscovered edges may be discovered. However, in view of {\bf H5} (applying Lemma~\ref{lem:useful} with $\sP_0=\sP$), we infer that each undiscovered vertex $v$ is incident with at most $4k$ discovered arcs.
Hence, property {\bf U1} remains valid for any undiscovered vertex $v\in B_j$ and bins $B_i,B_j$,  with the updated values of $\hat B_i$, $U^\pm_i(v)$ and $u^\pm_i(v)$.
Moreover, since we did not reveal any information about undiscovered arcs, the unexposed endpoints of the undiscovered arcs visible from $v$ remain random, and therefore property {\bf U2} also holds.
Now we contract each path in $\sP$ into a vertex. Vertices obtained from a contracted path are called {\em fat} and are placed in bin $B_1$, while other vertices are called {\em ordinary}. We declare fat vertices to be undiscovered, so all vertices in the contracted graph are undiscovered.
Let $n'$ be the number of vertices (ordinary or fat) that remain in the digraph after the path contractions. Clearly $n/(7k)\le n'\le n$, since each path has fewer than $7k$ vertices. In particular, $n'\to\infty$.
We claim that $k\mid n'$ and moreover, after contracting the paths, every bin contains exactly $n'/k$ vertices.
Indeed, when $k\mid n$, each path in $\sP$ has $6$ vertices in each bin $B_i$ ($i=2,\ldots, k$) and $7$ vertices in bin $B_1$, so path contractions maintain the bin sizes balanced. 
On the other hand, when $\pi=(\ra,\la,\ra,\la)$ and $n\equiv 2\mod 4$, two additional vertices---one in bin $B_1$ and one in $B_2$---are contracted (as a result of the longer path), so bins $B_1,B_2,B_3,B_4$ become balanced as well in this case.

\paragraph{Step 4 -- end of the proof:}
At this final stage, we will only need arcs between vertices in consecutive bins $B_i$ and $B_{i+1}$, for each $i\in[k]$, so we delete all the remaining arcs. Moreover, for each fat vertex $v$ obtained by contracting a path $P=v_1\cdots v_{6k+1}$, we delete all arcs (in- and out-arcs, visible and invisible) that are incident with $v_1$ except for those whose other end is in bin $B_k$, which remain unexposed. Similarly, we delete all arcs that are incident with $v_{6k+1}$ except for those whose other end is in bin $B_2$.
In other words, fat vertex $v$ plays the role of vertex $v_1$ (respectively, $v_{6k+1}$) when concerned with arcs between bins $B_1$ and $B_k$ (respectively, $B_1$ and $B_2$).
Let us call the resulting digraph $\cD_{**}$.
(Recall that $\cD_{**}$ has $n'\to\infty$ vertices with $k\mid n'$, and each bin contains exactly $n'/k$ vertices.)
We claim that, after all these edge deletions, properties {\bf U1}--{\bf U2} remain valid for any vertex $v\in B_j$ and bin $B_i$ with $i\in\{j-1,j+1\}$.\footnote{The proof of this claim makes use of the fact that $k\ge3$ and hence arcs between bins $B_1$ and $B_2$ are different from those between $B_1$ and $B_k$. This justifies our choice to use the pattern $(\ra,\la,\ra,\la)$ in the analysis of the alternating case. Other approaches are also possible, but we believe this is the simplest one.} This is clearly true if $v$ is a fat vertex corresponding to a path $P=v_1\cdots v_{6k+1}$, since $v$ inherits the arcs visible from $v_1$ with opposite end in $B_k$ and the arcs visible from $v_{6k+1}$ with opposite end in $B_2$. On the other hand, for any ordinary vertex $v\in B_j$ and bin $B_i$ ($i\in\{j-1,j+1\}$), the only arcs visible from $v$ that may have been deleted are those incident with a fat vertex (and thus with some vertex of a path in $\sP$). Hence, by property {\bf H5} as before, we conclude that {\bf U1} remains valid. Property {\bf U2} also follows from the fact that we have not exposed any additional information about the surviving arcs and thus they remain uniformly distributed.
Finally, {\bf U1} and {\bf U2} imply that the contracted digraph $\cD_{**}$ contains a copy of $D_{S\text{-in},T\text{-out}}$ (on $n'$ vertices with $n'\to\infty$) with $s_{i,i+1},s_{i,i-1},t_{i,i-1},t_{i,i-1} \ge 2$. So we can apply Lemma~\ref{mainlemma} and find a $\pi$-HC in $\cD_{**}$ a.a.s.
Finally, when $\pi$ is primitive of length $k\ge3$, un-contracting the paths yields the desired $\pi$-HC in $\cD_* \subseteq \cD_{n,p_*}$. When $\pi=(\ra,\la,\ra,\la)$, we simply get a $(\ra,\la)$-HC.
This finishes the proof of Theorem~\ref{thm1}.
Corollaries~\ref{cor1} and~\ref{cor2} follow immediately as a consequence of Lemma~\ref{lem:PH}.

\section{Final Comments}
We have proved a hitting time result for the existence of a Hamilton cycle with edges oriented according to a periodic pattern. We have shown that in some cases this typically requires asymptotically one half of that needed for a consistently oriented Hamilton cycle. It would be of some interest to discover how many such cycles there are typically and as to whether there are all such cycles, up to a bound on the length of the pattern $\pi$. As already mentioned, Ferber and Long \cite{FL} have shown that Hamilton cycles with arbitrary orientations occur a.a.s.\ at $m\approx n\log n$. It would be of interest to understand which class of patterns occur a.a.s.\ for $m\leq cn\log n$ where $0<c<1$ is a constant.

\end{document}